\title{A $p$-adic cohomological approach to congruences of Meromorphic Modular Forms}
\author{Paolo Bordignon}
\date{January 2026}
\email{p.bordignon@math.leidenuniv.nl}
\address{Mathematical Institute, Gorlaeus Building, Einsteinweg 55, 2333 CC, Leiden (Netherlands)}
\newtheorem{theo}{Theorem}
\newtheorem*{theo*}{Theorem}
\newtheorem{lemm}{Lemma}
\newtheorem{rema}{Remark}
\newtheorem{coro}{Corollary}
\newtheorem{mainthm}{Theorem}
\newtheorem{theorem}{Theorem}
\begin{document}

\begin{abstract}
We study congruences relating Fourier coefficients of meromorphic modular forms and Frobenius eigenvalues of elliptic curves corresponding to their poles. We develop a $p$-adic cohomological framework that interprets these congruences via the interaction between the rigid cohomology of modular curves and the crystalline structure of the associated elliptic curves. Using comparison theorems and the Gysin sequence, we relate the Frobenius actions in cohomology to the $U_p$-operator acting on spaces of overconvergent modular forms.
Our approach applies uniformly to both modular curves and Shimura curves admitting smooth integral models over $\mathbb{Z}_p$. 
\end{abstract}
\maketitle

\tableofcontents
\section{Introduction}
In this work, we provide a $p$-adic cohomological interpretation of certain congruences between Fourier coefficients of modular forms and Frobenius eigenvalues arising from the geometry of modular (and Shimura) curves, as studied in a recent work by Zhang \cite{zhang2025ellipticcurvesfouriercoefficients}. 
Zhang numerically observed several congruences of the form
\begin{equation*}
    a_p\left(\frac{E_4}{j-j(C)}\right)\equiv a_p(C)^2 \mod p 
\end{equation*}
where $C/\mathbb{Q}$ is an elliptic curve with good reduction at $p$ and $a_p(C)=p+1-|C(\mathbb{F}_p)|$. 
To study these relations, we introduce a cohomological framework and use the Gysin sequence to relate classes attached to modular forms to differential forms attached to the elliptic curves corresponding to the poles.

More precisely, let $X$ be the elliptic or quaternionic modular curve of level $N$, defined over $\mathbb{Z}_p$ with $(p,N)=1$. Consider $\omega$ the line bundle obtained from the universal object whose global sections are modular forms. Let $\mathcal{H}$ be the vector bundle of rank 2 obtained from the relative de Rham cohomology. To relate meromorphic modular forms with prescribed structure at the poles, we use de Rham cohomology with coefficients and relate classes in $H^1_\mathrm{dR}(X\setminus \{\alpha\},\mathcal{H})$ with fibers $\alpha^*\mathcal{H}$ for $\alpha$ a $\mathbb{Z}_p$-point. Using comparison theorems between rigid and de Rham cohomology, we relate meromorphic modular forms to overconvergent $p$-adic modular forms. In this way, we can naturally study the Frobenius action on the various cohomology groups. Enlarging the set of poles considered will allow us to study an explicit Frobenius action on the meromorphic modular forms representing the cohomology classes and to relate it with the action of the $U_p$-operator.
Theorem \ref{Theorem characteristic polynomial of Up action} is the core result of this paper and shows that the Frobenius eigenvalues of the de Rham cohomology at the poles are related to the $U_p$ action on meromorphic modular forms. 
\begin{theorem}
    Let $X$ be a modular curve (elliptic or quaternionic) of level $N$ defined over $\mathbb{Z}_p$, with $N>3$, $p>3$ and $(N,p)=1$. Fix a $\mathbb{Z}_p$-point $\alpha$ on the modular curve. Let $k$ be a positive integer with $p>k+1$, let $P(X),Q(X)\in \mathbb{Q}_p[X]$ be respectively the characteristic polynomials of the Frobenius action on $\mathrm{Sym}^k(\alpha^*\mathcal{H})[1]$ and on $H^1_{\mathrm{dR}}(X^{\mathrm{rig}},\mathcal{H}_k)$
    \begin{align*}
        P(X)=\det(\mathrm{Frob}-X&|\mathrm{Sym}^k(\alpha^*\mathcal{H})[1])=\sum_{i=0}^{k+1}c_iX^i,\\ Q(X)=\det(\mathrm{Frob}-X&|H^1_{\mathrm{dR}}(X^{\mathrm{rig}},\mathcal{H}_k)),\\
        P(X)Q(X)&=\sum_{i=0}^{d+k+1}e_iX^i
    \end{align*}
    with $d=\dim_{\mathbb{Q}_p}H^1_{\mathrm{dR}}(X^{\mathrm{rig}},\mathcal{H}_k)=\dim_{\mathbb{Q}_p}M_{k+2}+\dim_{\mathbb{Q}_p}S_{k+2}$. Let $M=d+k+1$, then for every $f\in M_{k+2}^{\mathrm{mero},\{\alpha\}}$ we have
    \begin{equation*}
        \sum_{i=0}^{M}e_{M-i}p^{M-i}U_p^i(f)\in \theta^{k+1}\left(M_{-k,\lambda}^{\dagger,\{\alpha\}}\right)
    \end{equation*}
    where $\theta$ is the differential operator $q\frac{\mathrm{d}}{dq}$ in the elliptic case and $M_{-k,\lambda}^{\dagger,\{\alpha\}}$ is the space of $\lambda$-overconvergent modular forms defined away from a small disk around $\alpha$.
\end{theorem}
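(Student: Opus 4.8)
The plan is to transport the Cayley--Hamilton identity for the Frobenius on the cohomology of the punctured curve $X\setminus\{\alpha\}$ into an identity among the operators $U_p^i$ on meromorphic forms, using two ingredients: the dictionary between $H^1$-classes and modular forms provided by the Kodaira--Spencer/Eichler--Shimura map, and the compatibility of Frobenius with the Frobenius operator $V$ on $p$-adic modular forms, $Vf(q)=f(q^p)$. Concretely, I would first record the map
\[
\mathrm{es}\colon M_{k+2}^{\mathrm{mero},\{\alpha\}}\longrightarrow H^1_{\mathrm{dR}}(X^{\mathrm{rig}}\setminus\{\alpha\},\mathcal{H}_k)\;\cong\;H^1_{\mathrm{rig}}(X^{\mathrm{rig}}\setminus\{\alpha\},\mathcal{H}_k)
\]
coming from the inclusion $\omega^{\otimes(k+2)}\cong\omega^{\otimes k}\otimes\Omega^1_X\hookrightarrow\mathcal{H}_k\otimes\Omega^1_X$ and the comparison isomorphism recalled above. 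The two structural facts I will invoke are: (i) a class $\mathrm{es}(h)$ vanishes precisely when $h\in\theta^{k+1}\!\left(M_{-k,\lambda}^{\dagger,\{\alpha\}}\right)$, because $\theta^{k+1}$ is, up to the unit-root correction, iterated Gauss--Manin applied to a lift of a section of the top Hodge quotient $\omega^{-k}$ of $\mathcal{H}_k$ and hence produces coboundaries; and (ii) under $\mathrm{es}$ the Frobenius is compatible with $V$ in the normalised form $\mathrm{Frob}\circ\mathrm{es}=p\cdot\mathrm{es}\circ V$ as maps into $H^1_{\mathrm{rig}}(X^{\mathrm{rig}}\setminus\{\alpha\},\mathcal{H}_k)$, the single factor $p$ recording the Tate twist carried by $H^1$ relative to the coefficient sheaf. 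Both (i) and (ii) are assertions about overconvergent forms and rigid cohomology, to be checked via the Tate-curve (canonical-coordinate) description of $\mathrm{Frob}$ on $q$-expansions, and both use $p>k+1$ to control the radii of overconvergence.

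Next I would identify the characteristic polynomial of $\mathrm{Frob}$ on the middle term. The Gysin sequence of $\{\alpha\}\hookrightarrow X$ reads
\[
0\to H^1_{\mathrm{dR}}(X^{\mathrm{rig}},\mathcal{H}_k)\to H^1_{\mathrm{dR}}(X^{\mathrm{rig}}\setminus\{\alpha\},\mathcal{H}_k)\xrightarrow{\ \mathrm{res}\ }\mathrm{Sym}^k(\alpha^*\mathcal{H})[1]\to H^2_{\mathrm{dR}}(X^{\mathrm{rig}},\mathcal{H}_k).
\]
Since $k\ge 1$, the monodromy of $\mathcal{H}_k$ has no invariants, so $H^0_{\mathrm{dR}}(X^{\mathrm{rig}},\mathcal{H}_k)=0$, and by Poincaré duality together with the self-duality $\mathcal{H}_k^{\vee}\cong\mathcal{H}_k(k)$ also $H^2_{\mathrm{dR}}(X^{\mathrm{rig}},\mathcal{H}_k)=0$. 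Hence the sequence is short exact, the Frobenius characteristic polynomial of $H^1_{\mathrm{rig}}(X^{\mathrm{rig}}\setminus\{\alpha\},\mathcal{H}_k)$ equals $P(X)Q(X)=\sum_{i=0}^{M}e_iX^i$ (of degree $M=d+k+1$), and Cayley--Hamilton yields $\sum_{i=0}^{M}e_i\,\mathrm{Frob}^i=0$ on this space.

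Finally I would combine the pieces. Applying $\sum_i e_i\mathrm{Frob}^i=0$ to $\mathrm{es}(f)$ and iterating (ii), namely $\mathrm{Frob}^i\,\mathrm{es}(f)=p^i\,\mathrm{es}(V^if)$, gives $\mathrm{es}\!\left(\sum_{i=0}^{M}e_ip^iV^if\right)=0$; by (i) there is $g\in M_{-k,\lambda}^{\dagger,\{\alpha\}}$ with $\sum_{i=0}^{M}e_ip^iV^if=\theta^{k+1}(g)$. Now apply $U_p^{M}$: since $U_pV=\mathrm{id}$ one has $U_p^{M}V^i=U_p^{M-i}$ for $0\le i\le M$, and since $U_p\theta=p\,\theta U_p$ one has $U_p^{M}\theta^{k+1}(g)=p^{(k+1)M}\theta^{k+1}(U_p^{M}g)$, which again lies in $\theta^{k+1}\!\left(M_{-k,\lambda}^{\dagger,\{\alpha\}}\right)$ because $U_p$ (like $V$) preserves the space of forms $\lambda$-overconvergent away from a small disk around $\alpha$. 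Therefore
\[
\sum_{i=0}^{M}e_ip^iU_p^{M-i}(f)=\sum_{i=0}^{M}e_{M-i}p^{M-i}U_p^{i}(f)\in\theta^{k+1}\!\left(M_{-k,\lambda}^{\dagger,\{\alpha\}}\right),
\]
which is the asserted containment.

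The main obstacle is input (ii): showing that passing from $\mathrm{Frob}$ on the rigid cohomology of the \emph{punctured} curve to $V$ on $q$-expansions costs exactly one power of $p$, uniformly in the presence of the pole at $\alpha$, and that this identity survives the iteration modulo coboundaries. This, together with the precise description of $\ker\mathrm{es}$ in (i), is where Coleman's comparison between rigid cohomology and overconvergent modular forms and the hypothesis $p>k+1$ do the real work; by contrast the Gysin computation and the bookkeeping with $U_p$, $V$ and $\theta$ are formal.
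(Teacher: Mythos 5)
Your overall strategy matches the paper's: use the Gysin sequence for $\{\alpha\}\hookrightarrow X$ to see that $P(X)Q(X)$ annihilates the Frobenius on $H^1_{\mathrm{dR}}$ of the punctured curve, represent classes by meromorphic forms, and then convert powers of $\mathrm{Frob}$ into powers of $U_p$. However, your key input (ii) has a genuine gap as stated. The Frobenius lift $V$ is the map induced by quotienting by the canonical subgroup, and it is only defined on the locus where the canonical subgroup exists, i.e.\ away from small disks around the supersingular points. Consequently $Vf$ is \emph{not} a section over $X^{\mathrm{rig}}$ minus only a disk around $\alpha$, and the identity $\mathrm{Frob}\circ\mathrm{es}=p\cdot\mathrm{es}\circ V$ does not typecheck on $H^1_{\mathrm{dR}}(X^{\mathrm{rig}}\setminus\{\alpha\},\mathcal{H}_k)$. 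The paper's proof deals with exactly this: it enlarges the set of removed points to $S=\{\alpha\}\cup SS_p$, transfers the Cayley--Hamilton identity along the functorial map of Gysin sequences to $M_{k+2,\lambda}^{\dagger,S}/\theta^{k+1}(M_{-k,\lambda}^{\dagger,S})$, and there invokes Coleman's theorem (Theorem \ref{V and U_p inverse}) giving $\mathrm{Frob}\circ\mathrm{Ver}=\mathrm{Ver}\circ\mathrm{Frob}=p^{k+1}$ with $\mathrm{Ver}$ acting by $U_p$; composing the annihilation identity with $\mathrm{Ver}^{M}$ then stays inside the quotient and one descends back to poles along $\{\alpha\}$ only. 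You flag (ii) as ``the main obstacle,'' but the resolution is not a computation on $q$-expansions: it is this enlargement of the pole set, which your argument is missing.

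Two further points. First, your normalisation $\mathrm{Frob}=p\cdot V$ with $U_pV=\mathrm{id}$ amounts to $\mathrm{Frob}\circ\mathrm{Ver}=p$, which is the weight of $H^1$ with trivial coefficients; with $\mathrm{Sym}^k$ coefficients the correct relation is $\mathrm{Frob}\circ\mathrm{Ver}=p^{k+1}$, so the exact powers of $p$ in your (ii) need to be re-examined for $k\geq 1$ (to be fair, the powers of $p$ appearing in the paper's own display are not obviously consistent with $p^{k+1}$ either). Second, your final step commutes $U_p^{M}$ past $\theta^{k+1}$ via the $q$-expansion identity $U_p\theta=p\,\theta U_p$; this is unavailable in the quaternionic case, which the theorem also covers. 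The paper avoids it entirely by working with $\mathrm{Ver}$ as an endomorphism of the quotient $M_{k+2,\lambda}^{\dagger}/\theta^{k+1}(M_{-k,\lambda}^{\dagger})$, so that applying $\mathrm{Ver}^{M}$ to a class that vanishes in the quotient trivially produces an element of $\theta^{k+1}(M_{-k,\lambda}^{\dagger})$.
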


The integrality provided by crystalline cohomology then allows us to deduce congruence relations. In the elliptic setting, we can derive an explicit congruence at the level of $q$-expansions as shown in Corollary \ref{congruence q-coefficients}. 

Some technical constructions are only briefly outlined in this article, such as the construction of the $U_p$ operator on the open curve and the extension of Coleman's result \ref{V and U_p inverse} to Shimura curves. A more detailed treatment and description will appear in the author’s PhD thesis.

\subsection{An important note on a recent result}
A recent preprint \cite{allen2025atkinswinnertondyercongruencesmeromorphic} came out during the writing of this article. The authors analyze the same congruences between meromorphic modular forms and elliptic curves and give a proof of many of Zhang's conjectures. Their methods extend the classical work of Scholl \cite{scholl1985modular} and a more recent work by Kazalicki-Scholl \cite{kazalicki‑scholl2013} to the case of meromorphic modular forms. The works are similar in many aspects, in particular the initial cohomological setting and the Gysin sequence are used in fundamentally the same way. Our approach differs from theirs primarily in the use of $\log$-crystalline cohomology to identify a Frobenius stable lattice and the use of an explicit Frobenius lift enlarging the set of poles to the set of supersingular points. This allows us to avoid computing with local expansions at the cusp at infinity, enlarge our framework to the Shimura setting and derive relations between Fourier coefficients only at the very end. On the other hand, their precise work takes care of the order of the poles of the meromorphic modular forms allowing them to obtain very sharp congruences on the cohomological subgroup. In particular, they obtain a stronger result in the CM case.

After a very kind and fruitful exchange of emails with the authors, where they have looked at a first draft of this work and at the techniques involved, they allowed us to publish our version. 

\subsection{Acknowledgments}
The author would like to thank Jan Vonk for his invaluable supervision and support, and for drawing his attention to many of the numerical congruences considered in this paper. He also thanks Pengcheng Zhang and Tiago J. Fonseca for inspiring talks during their visits to Leiden University.

\section{Geometric setting}
Let $p>3$ be a prime, let $N>3$ be an integer coprime to $p$. Consider one of the following settings

\begin{itemize}
    \item $X=X_1(N)$, the modular curve of level $\Gamma_1(N)$ defined over $\mathbb{Z}_p$ classifying elliptic curves with level structure,
    \item $X=X_B$ a Shimura curve defined over $\mathbb{Z}_p$ for $B$ an indefinite non-split quaternion algebra over $\mathbb{Q}$ of discriminant $\delta$ coprime with $p$ and of level $V_1(N)$ classifying false elliptic curves with level structure.
\end{itemize}
We will denote $\overline{X}=X\times \mathbb{F}_p$ its special fiber, let
$C\subset X(\mathbb{Z}_p)$ be the finite collection (possibly empty) of cusps. Let $\omega$ be the ample line bundle over $X$ obtained from the pushforward of the differential bundle on the universal object. Let $(\mathcal{H},\nabla)$ be the rank 2 vector bundle arising from the relative de Rham cohomology over $X$ with logarithmic connection
    \begin{equation*}
        \nabla:\mathcal{H}\rightarrow\mathcal{H}\otimes_{\mathbb{Z}_p}\Omega^1_{X}(\log C)
    \end{equation*}
    coming from the Gauss-Manin connection. In the quaternionic case we are fixing a choice of a non-trivial idempotent element of $M_2(\mathbb{Z}_p)$ and projecting the four-dimensional vector bundle coming from the relative de Rham cohomology to obtain $\mathcal{H}_k$ and analogously with $\omega$. See \cite{kassaei2004padic}. 
    Denote by $\mathcal{H}_k$ its $k$-th symmetric tensor power $\mathcal{H}_k=\mathrm{Sym}^k\mathcal{H}$.

We recall the standard properties of $\mathcal{H}_k$.
The vector bundles $\omega$ and $\mathcal{H}$ fit in a short exact sequence
    \begin{equation}\label{hodge filtration 1}
    0\rightarrow \omega\rightarrow \mathcal{H}\rightarrow \omega^{-1}\rightarrow 0
\end{equation}
inducing a $(k+1)$-step decreasing filtration on $\mathcal{H}_k$
\begin{equation}
    \mathcal{H}_k=\mathrm{Fil}^0\mathcal{H}_k\supseteq \mathrm{Fil}^1\mathcal{H}_k\supseteq\cdots\mathrm{Fil}^k\mathcal{H}_k\supseteq\mathrm{Fil}^{k+1}\mathcal{H}_k=0.
\end{equation}
The graded pieces of the filtration are isomorphic to
\begin{equation*}
    Gr_i\mathcal{H}_k=\mathrm{Fil}^{k-i}\mathcal{H}_k/\mathrm{Fil}^{k+1-i}\mathcal{H}_{k}\cong \omega^{k-2i}.
\end{equation*}
The connection obeys \emph{Griffith's transversality} 
\begin{equation*}
    \nabla \mathrm{Fil}^{k-i}\mathcal{H}_k\subseteq\mathrm{Fil}^{k-i-1}\mathcal{H}_k\otimes_{\mathcal{O}_X}\Omega^1_X(\log C)
\end{equation*}
and these maps induce a \emph{Kodaira-Spencer} type of isomorphism of $\mathcal{O}_X$-modules for $p>k+1$
\begin{equation}\label{Kodaira Spencer}
    \nabla:\mathrm{Gr}_i\mathcal{H}_k\cong \mathrm{Gr}_{i+1}\mathcal{H}_k\otimes_X\Omega^1_X(\log C).
\end{equation}
Following the usual notation, we write
\begin{align*}
    M_k(X)=\Gamma(X,\omega^{k}),\quad S_k(X)=\Gamma(X,\omega^{k-2}\otimes\Omega^1_X).
\end{align*}
We denote the complex arising from the connection by
\begin{equation*}
    \mathcal{H}_k^\bullet:=\left[ \mathcal{H}_k\rightarrow \mathcal{H}_k\otimes_{\mathcal{O}_X}\Omega^1_X\right]
\end{equation*}
and denote its hypercohomology groups by
\begin{equation*}
    H^i_{dR}(X,\mathcal{H}_k):=\mathbb{H}^i(X,\mathcal{H}_k^\bullet).
\end{equation*}
\begin{lemm}\label{lemma exact sequence theta quotient and relative de Rham}
    We have a short exact sequence
    \begin{equation}
    0\rightarrow \omega_{-k} \xrightarrow{\theta^{k+1}} \omega_{k+2}\rightarrow \mathcal{H}_k\otimes_X\Omega^1_X(\log C)/\nabla_k(\mathcal{H}_k)\rightarrow 0
\end{equation}
where the right map is compatible with inclusions and an \emph{Eichler-Shimura} short exact sequence of $\mathcal{O}_K$-modules
\begin{equation*}
    0\rightarrow M_{k+2}(X)\rightarrow H^{1}_{\mathrm{dR}}(X,\mathcal{H}_{k})\rightarrow S_{k+2}(X)^\vee\rightarrow0.
\end{equation*}
\end{lemm}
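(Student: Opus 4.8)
The plan is to establish the first exact sequence --- which is really a local statement about the shape of $\nabla_k$ relative to the Hodge filtration --- and then to read off the Eichler--Shimura sequence by passing to hypercohomology and invoking (relative) Serre duality on $X$. The only substantial input is the Kodaira--Spencer isomorphism \eqref{Kodaira Spencer}, which is also the one place where $p>k+1$ is used: forming $\mathrm{Sym}^k$ of the Gauss--Manin connection produces integer coefficients bounded by $k+1$, and $p>k+1$ makes them units in $\mathcal O_K$.

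For the first sequence I would work Zariski-locally, where $\mathcal H_k$ is free, the filtration of \eqref{hodge filtration 1} is by sub-bundles with bottom step $\mathrm{Fil}^k\mathcal H_k\cong\omega^k$ and top quotient $\mathcal H_k/\mathrm{Fil}^1\mathcal H_k\cong\omega_{-k}$, and, by Griffiths transversality, $\nabla_k$ is block lower triangular for this filtration with off-diagonal blocks the Kodaira--Spencer isomorphisms. Solving systems of symbol equations inductively along the filtration, each step inverted by such an isomorphism, then gives two facts. First, every local section $\bar g$ of $\omega_{-k}$ admits a unique lift $\widehat g\in\mathcal H_k$ with $\nabla_k\widehat g\in\mathrm{Fil}^k\mathcal H_k\otimes\Omega^1_X(\log C)\cong\omega^k\otimes\Omega^1_X(\log C)\cong\omega_{k+2}$ (the last isomorphism again by \eqref{Kodaira Spencer}); I would set $\theta^{k+1}(\bar g):=\nabla_k\widehat g$, check that in the elliptic case this recovers $(q\,\mathrm{d}/\mathrm{d}q)^{k+1}$ on $q$-expansions (in particular it is nonzero), and note that $\theta^{k+1}$ is injective as a morphism of sheaves: if $\nabla_k\widehat g=0$ then $\widehat g$ is a horizontal section of $\mathcal H_k$, of which there are none since the universal family is non-isotrivial. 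Second, $\nabla_k(\mathcal H_k)$ together with $\mathrm{Fil}^k\mathcal H_k\otimes\Omega^1_X(\log C)$ exhausts $\mathcal H_k\otimes\Omega^1_X(\log C)$. Combining the two, the composite $\omega_{k+2}\cong\mathrm{Fil}^k\mathcal H_k\otimes\Omega^1_X(\log C)\hookrightarrow\mathcal H_k\otimes\Omega^1_X(\log C)\twoheadrightarrow\mathcal H_k\otimes\Omega^1_X(\log C)/\nabla_k(\mathcal H_k)$ is surjective with kernel exactly $\theta^{k+1}(\omega_{-k})$, which is the asserted sequence; the right-hand map is by construction induced by the inclusion $\mathrm{Fil}^k\mathcal H_k\otimes\Omega^1_X(\log C)\hookrightarrow\mathcal H_k\otimes\Omega^1_X(\log C)$, whence its compatibility with inclusions. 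The quaternionic case is the same after the fixed idempotent projection, and nothing special occurs at the cusps because their poles are already built into $\Omega^1_X(\log C)$.

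For the Eichler--Shimura sequence, write $\mathcal H_k^\bullet=[\mathcal H_k\xrightarrow{\nabla_k}\mathcal H_k\otimes\Omega^1_X(\log C)]$ in degrees $0$ and $1$ and let $\mathcal Q$ be its cokernel sheaf. Since $\nabla_k$ is injective, the projection $\mathcal H_k^\bullet\to\mathcal Q[-1]$ is a quasi-isomorphism, so $H^1_{\mathrm{dR}}(X,\mathcal H_k)=\mathbb H^1(X,\mathcal H_k^\bullet)\cong H^0(X,\mathcal Q)$. I would then take the long exact cohomology sequence of the first exact sequence. Since $\omega$ is ample and $k\ge 1$, the bundle $\omega_{-k}$ has negative degree, so $H^0(X,\omega_{-k})=M_{-k}(X)=0$; and by relative Serre duality, using $\Omega^1_X\cong\omega^2(-C)$ --- a consequence of \eqref{Kodaira Spencer} and the reducedness of the cuspidal divisor --- one obtains $H^1(X,\omega_{k+2})\cong H^0(X,\omega^{-k}(-C))^{\vee}=0$ and $H^1(X,\omega_{-k})\cong H^0(X,\omega^{k}\otimes\Omega^1_X)^{\vee}=S_{k+2}(X)^{\vee}$. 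The long exact sequence therefore collapses to $0\to M_{k+2}(X)\to H^0(X,\mathcal Q)\to S_{k+2}(X)^{\vee}\to 0$, and combining with $H^0(X,\mathcal Q)\cong H^1_{\mathrm{dR}}(X,\mathcal H_k)$ gives the Eichler--Shimura sequence, the argument being valid already over $\mathcal O_K$ since the pushforwards in question vanish or are finite free on the nose.

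The hard part will be the local analysis behind the first sequence: one must verify, uniformly at the cusps, at the supersingular points and (in the Shimura case) after the idempotent projection, that the off-diagonal blocks of $\nabla_k$ are exactly the maps of \eqref{Kodaira Spencer}, that the inductive resolution of the symbol equations is unobstructed, and that $p>k+1$ clears precisely the denominators that appear; one also has to match the resulting operator $\theta^{k+1}$ with the iterate of $\theta=q\,\mathrm{d}/\mathrm{d}q$ used in the main theorem. Once that local picture is in place, the passage to cohomology is formal.
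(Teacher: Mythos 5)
Your proposal is correct and follows essentially the same route as the paper: your inductive resolution of the symbol equations along the Hodge filtration, with each step inverted by a Kodaira--Spencer isomorphism \eqref{Kodaira Spencer} (where $p>k+1$ enters), is exactly the explicit form of the paper's degenerating spectral sequence attached to the filtered complex $\mathrm{Fil}^\bullet\mathcal{H}_k^\bullet$, whose only surviving terms are $\mathcal{H}_k/\mathrm{Fil}^1\mathcal{H}_k$ and $\mathrm{Fil}^k\mathcal{H}_k\otimes\Omega^1_X(\log C)$. The deduction of the Eichler--Shimura sequence by passing to (hyper)cohomology and using ampleness of $\omega$ together with Serre duality is likewise the paper's argument.
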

\begin{proof}
     Consider the filtration $(\mathrm{Fil}^p\mathcal{H}_k^\bullet)_p$ of the complex $\mathcal{H}_k^\bullet$ 
     \begin{equation*}\label{filtration relative de rham complex}
    \mathrm{Fil}^p\mathcal{H}_k^\bullet:=\left[\mathrm{Fil}^{p}\mathcal{H}_k\rightarrow \mathrm{Fil}^{p-1}\mathcal{H}_k\otimes_{\mathcal{O}_X}\Omega^1_X(\log C)\right].
\end{equation*}
and take the associated spectral sequence $E_0^{p,q}=\mathrm{Fil}^p\mathcal{H}_k^{p+q}/\mathrm{Fil}^{p+1}\mathcal{H}_k^{p+q}$. The only terms that survive in page 1 are
\begin{equation*}
    E_1^{0,0}\cong \mathcal{H}_k/\mathrm{Fil}^1\mathcal{H}_k,\quad E_1^{k+1,-k}\cong \mathrm{Fil}^{k}\mathcal{H}_k\otimes_X\Omega^1_X(\log C).
\end{equation*}
The spectral sequence degenerates at page $k+2$ converging to $E_{k+2}^{k+1,-k}=H^1(\mathcal{H}_k^\bullet)$. This gives rise to the short exact sequence
\begin{align*}
    0\rightarrow \mathcal{H}_k/\mathrm{Fil}^1\mathcal{H}_k \xrightarrow{\theta^{k+1}} \mathrm{Fil}^{k}\mathcal{H}_k\otimes_X\Omega^1_X(\log C)\rightarrow \mathcal{H}_k\otimes_X\Omega^1_X(\log C)/\nabla_k(\mathcal{H}_k)\rightarrow 0.
\end{align*}
The second exact sequence is obtained applying the spectral sequence to the hypercohomology of the graded pieces obtaining
\begin{align*}
    0&\rightarrow H_{\mathrm{dR}}^0(X,\mathcal{H}_k^\bullet)\rightarrow H^0(X,\omega^{-k})\rightarrow H^0(X,\omega^{k}\otimes_X\Omega^1_X(\log C))\rightarrow H_{\mathrm{dR}}^1(X,\mathcal{H}_k^\bullet)\rightarrow \\
    &\rightarrow H^1(X,\omega^{-k})\rightarrow H^1(X,\omega^{k}\otimes_X\Omega^1_X(\log C)))\rightarrow \cdots
\end{align*}
giving the desired exact sequence using ampleness of $\omega$ and Serre duality.
\end{proof}
Fix an effective divisor $S=\sum_{i=1}^{n}\alpha_i$ supported on the open curve $X\setminus C$ and defined over $\mathbb{Z}_p$,
\begin{equation*}
    \alpha_i:\mathrm{Spec}(\mathcal{O}_{K})\rightarrow X\setminus C
\end{equation*}
for $K$ a finite field extension over $\mathbb{Q}_p$ with distinct close points on the special fiber $\overline{X}$.
We denote by $\overline{S}$ their reduction to the special fiber $\overline{X}$ and $S_{\mathbb{Q}_p}$ their restriction to the generic fiber. We have that the curve $U=X_{\mathbb{Q}_p}\setminus S_{\mathbb{Q}_p}$ is affine.

As a direct consequence of the lemma, the de Rham cohomology of $\mathcal{H}_k$ can be computed by
\begin{equation*}
    H^1_{\mathrm{dR}}(U,\mathcal{H}_k)=M_{k+2}^{\mathrm{mero},S}/\theta^{k+1}(M_{-k}^{\mathrm{mero},S})
\end{equation*}
where we have used the notation $M_{k+2}^{\mathrm{mero},S}=\Gamma(U,\omega^{k+2})$ and should be thought of as \emph{meromorphic modular forms with poles along $S$}.

\section{Rigid cohomological description}

Consider $X^{\mathrm{rig}}$ to be the rigid analytification of $X$ and the reduction map
\begin{equation*}
    \mathrm{red}:X^{\mathrm{rig}}\rightarrow \overline{X}(\overline{\mathbb{F}}_p)
\end{equation*}
where we fixed $\overline{\mathbb{F}}_p$ algebraic closure of $\mathbb{F}_p$. We then consider the wide open obtained from $X^{\mathrm{rig}}$ removing closed balls in the residue disks of the points $S$. Since $X$ is smooth, $\mathrm{red}^{-1}(\overline\alpha)$ is conformal to an open ball $B$ for every $\overline{\alpha}\in \overline{S}$. Fixing this parametrization and fixing $0<\lambda<1$, define
\begin{equation*}
    V_{\lambda}=X^{\mathrm{rig}}\setminus\bigcup_{\alpha\in S}B(\alpha,\lambda)
\end{equation*}
with $B(\alpha,\lambda)$ closed ball inside $\mathrm{red}^{-1}(\overline{\alpha})$ centered in $\alpha$.
After an application of rigid GAGA to the coherent sheaves, the acyclicity of wide open neighborhoods implies that the hypercohomology of the complex $\mathcal{H}_k^{\bullet}$ can be computed simply by
    \begin{equation*}
        H^1_{\mathrm{dR}}(V_{\lambda},\mathcal{H}_k^{\mathrm{rig}})\cong
        \Gamma(V_\lambda, \mathcal{H}_k^{rig}\otimes\Omega^1_{X^{rig}}(\log C))/\nabla_k(\Gamma(V_\lambda, \mathcal{H}_k^{rig}))
    \end{equation*}

Borrowing once again the notation from the modular curve case, we will denote the sections of $\omega^{\mathrm{rig}}$ over $V_\lambda$ by
\begin{equation*}
    M_{k,\lambda}^{\dagger,S} = \Gamma(V_\lambda,(\omega^{\mathrm{rig}})^k)
\end{equation*}
and they should be thought as \emph{$\lambda$-overconvergent modular forms}. We will denote it simply as $M_{k,\lambda}^{\dagger}$ when the set of poles is clear from the context.

We can apply the acyclicity property of $V_\lambda$ to the short exact sequence of Lemma \ref{lemma exact sequence theta quotient and relative de Rham}.
    \begin{equation}\label{quotient of overconvergent modular forms as cohomology group}
    M_{k+2,\lambda}^\dagger/\theta^{k+1}(M_{-k,\lambda}^\dagger)\cong \Gamma(V_\lambda, \mathcal{H}_k^{rig}\otimes\Omega^1_{X^{rig}}(\log C))/\nabla_k(\Gamma(V_\lambda, \mathcal{H}_k^{rig}))\cong  H^1_{\mathrm{dR}}(V_{\lambda},\mathcal{H}_k^{\mathrm{rig}}).
    \end{equation}
    
Since every vector bundle with connection on a smooth formal model $X$ of $X^{rig}$ is overconvergent, the work of Baldassarri-Chiarellotto \cite{baldassarri-chiarellotto2001} establishes a comparison isomorphism
\begin{equation*}
    H^1_{\mathrm{dR}}((X_{\mathbb{Q}_p},S_{\mathbb{Q}_p}),\mathcal{H}_k)\cong H^1_{\mathrm{dR}}(V_{\lambda},\mathcal{H}_k^{\mathrm{rig}}).
\end{equation*}
The left-hand side in the previous isomorphism is the de Rham cohomology with logarithmic coefficients. In characteristic 0, by an analytic result of Deligne \cite{deligne1970equations} and later extended to an algebraic version in \cite{abbott‑kedlaya‑roe2007bounding} we can relate it to the open curve case
\begin{equation*}
    H^1_{\mathrm{dR}}(X_{\mathbb{Q}_p}\setminus S_{\mathbb{Q}_p},\mathcal{H}_k)\cong H^1_{\mathrm{dR}}((X_{\mathbb{Q}_p},S_{\mathbb{Q}_p}),\mathcal{H}_k)\cong H^1_{\mathrm{dR}}(V_{\lambda},\mathcal{H}_k^{\mathrm{rig}}).
\end{equation*}
    In particular, we have the following commutative diagram where all arrows are isomorphisms
    \begin{equation}\label{meromorphic to overconvergent isomorphism}
        \begin{tikzcd}
{M_{k+2}^{\mathrm{mero},S}/\theta^{k+1}(M_{-k,\lambda}^{\mathrm{mero},S})} \arrow[d] \arrow[rr] &  & {H^1_{\mathrm{dR}}(X_{\mathbb{Q}_p}\setminus S_{\mathbb{Q}_p},\mathcal{H}_k)} \arrow[d]   \\
{M_{k+2,\lambda}^\dagger/\theta^{k+1}(M_{-k,\lambda}^\dagger)} \arrow[rr]                       &  & {H^1_{\mathrm{dR}}(V_\lambda,\mathcal{H}_k^{\mathrm{rig}}).}
\end{tikzcd}
    \end{equation}
The natural inclusion $M_{k+2}^{\mathrm{mero},S}\hookrightarrow M_{k+2,\lambda}^{\dagger,S}$ allow us to represent the cohomology classes in the rigid setting by meromorphic modular poles along $S$.

In order to study and relate the Frobenius action on global sections and on the fibers we will use the structure of  \emph{overconvergent $\log$ $F$-isocrystal} of $(\mathcal{H}_k,\nabla)$. Explicit Frobenius lifts have been studied in the classical work of Katz \cite{katz1973padic} while the Shimura curve case has been studied by Kassaei in \cite{kassaei2004padic}.

We apply the Frobenius equivariant residue sequence following Coleman \cite{coleman1994padicShimura} and Coleman-Iovita \cite{coleman‑iovita2010hidden}.

\begin{equation}\label{Gysin sequence 1}
    0\rightarrow H^{1}_{\mathrm{dR}}(X^{rig},\mathcal{H}_k^{\mathrm{rig}})\rightarrow H^1_{\mathrm{dR}}(V_\lambda,\mathcal{H}_k^{\mathrm{rig}})\xrightarrow{\mathrm{Res}}\bigoplus_{\alpha\in S_K}H^0_{\mathrm{dR}}(A_{\alpha,\lambda},\mathcal{H}_{k})[1]\rightarrow0
\end{equation}
Where $A_{\alpha,\lambda}$ are rigid subspaces of $V_{\lambda}$ conformal to the open annuli $\mathrm{res}^{-1}(\overline{\alpha})\setminus B(\alpha,\lambda)$ and $[1]$ refers to the shift in the Frobenius action.
The previous exact sequence gives in fact a rigid analogue of the Gysin sequence. Since every residue disk admits a basis of horizontal section for an overconvergent isocrystal as proved by Katz in \cite{katz1971travaux}, we can rewrite the exact sequence as
\begin{equation}\label{Gysin sequence}
    0\rightarrow H^{1}_{\mathrm{dR}}(X^{rig},\mathcal{H}_k^{\mathrm{rig}})\rightarrow H^1_{\mathrm{dR}}(V_\lambda,\mathcal{H}_k^{\mathrm{rig}})\xrightarrow{\mathrm{Res}}\bigoplus_{\alpha\in S_{\mathbb{Q}_p}}\mathrm{Sym^k}(\alpha^*\mathcal{H})[1]\rightarrow 0.
\end{equation}

In particular we have that the cohomology groups have the following dimensions
\begin{itemize}
    \item $\dim_{\mathbb{Q}_p}H^{1}_{\mathrm{dR}}(X^{rig},\mathcal{H}_k^{\mathrm{rig}})=\dim_{\mathbb{Q}_p}M_{k+2}(X_{\mathbb{Q}_p})+\dim_{\mathbb{Q}_p}S_{k+2}(X_{\mathbb{Q}_p})$ by Lemma \ref{lemma exact sequence theta quotient and relative de Rham},
    \item $\dim_{\mathbb{Q}_p}\mathrm{Sym^k}(\alpha^*\mathcal{H})=k+1$ for every $\alpha\in S_{\mathbb{Q}_p}$,
    \item $\dim_{\mathbb{Q}_p}H^1_{\mathrm{dR}}(V_\lambda,\mathcal{H}_k^{\mathrm{rig}})=\dim_{\mathbb{Q}_p}H^{1}_{\mathrm{dR}}(X^{rig},\mathcal{H}_k^{\mathrm{rig}})+|S_{\mathbb{Q}_p}|(k+1)$.
\end{itemize}

In the elliptic setting we have 
\begin{equation*}
    \mathrm{Sym^k}(\alpha^*\mathcal{H})\cong \mathrm{Sym^k}(H^1_{\mathrm{dR}}(\mathcal{E}_\alpha))
\end{equation*} 
for $\mathcal{E}_\alpha$ elliptic curve corresponding to the point $\alpha$. In the quaternionic setting we have 
\begin{equation*}
    \mathrm{Sym^k}(\alpha^*\mathcal{H})\cong \mathrm{Sym^k}(e_1.H^1_{\mathrm{dR}}(\mathcal{A}_\alpha))
\end{equation*}
for $\mathcal{A}_\alpha$ abelian surface corresponding to the point $\alpha$ and $e_1$ a fixed choice of idempotent in $M_2(\mathbb{Z}_p)$.

\begin{rema}[Parallel transport of eigenbasis]
    The purpose of this remark is to explain how Frobenius eigenbases behave when the poles are moved inside the same residue disk. The rigid Gysin sequence \eqref{Gysin sequence 1} does not depend on the choice of poles as long as they are lifts of a set of point modulo $p$.
    Consider $S=\{\alpha_1,\dots,\alpha_n\}$ a set of smooth points on the modular curve $X$. Consider $S'=\{\alpha_1',\dots,\alpha_n'\}$ a second set for which $\alpha_i$ and $\alpha_i'$ lie in the same residue disk for $i=1,\dots,n$. 
    Recall that the 0-th hypercohomology group is given by
    \begin{equation*}
        H^0_{\mathrm{dR}}(A_{\alpha,\lambda},\mathcal{H}_{k})\cong \Gamma(A_{\alpha,\lambda},\mathcal{H}_k^{\nabla=0}).
    \end{equation*}
    Then we have the following commutative diagram

\begin{equation*}
    \begin{tikzcd}
0 \arrow[r] & {H_{\mathrm{dR}}^1(X_{\mathbb{Q}_p},\mathcal{H}_k)} \arrow[r]                                                   & {H_{\mathrm{dR}}^1(X_{\mathbb{Q}_p}\setminus S_{\mathbb{Q}_p},\mathcal{H}_k)} \arrow[r]                                 & \bigoplus_{\alpha\in S}\mathrm{Sym}^k(H^1_{\mathrm{dR}}(\mathcal{E}_{\alpha}))[1] \arrow[r]                                                   & 0 \\
0 \arrow[r] & {H^1_{\mathrm{dR}}(X^{\mathrm{rig}},\mathcal{H}_k^{\mathrm{rig}})} \arrow[r] \arrow[d] \arrow[u] & {H^1_{\mathrm{dR}}(V_\lambda,\mathcal{H}_k^{\mathrm{rig}})} \arrow[r] \arrow[d] \arrow[u] & {\bigoplus_{\alpha\in S}\Gamma(A_{\alpha,\lambda},\mathcal{H}_k^{\nabla=0})[1]} \arrow[r] \arrow[d] \arrow[u] & 0 \\
0 \arrow[r] & {H_{\mathrm{dR}}^1(X_{\mathbb{Q}_p},\mathcal{H}_k)} \arrow[r]                                                   & {H_{\mathrm{dR}}^1(X_{\mathbb{Q}_p}\setminus S'_{\mathbb{Q}_p},\mathcal{H}_k)} \arrow[r]                                & \bigoplus_{\alpha'\in S'}\mathrm{Sym}^k(H^1_{\mathrm{dR}}(\mathcal{E}_{\alpha'}))[1] \arrow[r]                                                & 0
\end{tikzcd}
\end{equation*}
where the vertical maps are isomorphisms. Fixing a basis for one set of poles $S$, we can move the basis to $S'$ using \emph{parallel transport} computing an horizontal basis
\begin{equation*}
    \chi(\alpha,\alpha'):\mathrm{Sym}^k(H^1_{\mathrm{dR}}(\mathcal{E}_{\alpha}))\rightarrow \mathrm{Sym}^k(H^1_{\mathrm{dR}}(\mathcal{E}_{\alpha'})).
\end{equation*}
For example, we could try to compute the eigenbasis on CM points and then move the basis to any other point in the same residue disk.
\end{rema}

\section{Integral Frobenius structure}
We now consider the $\log$-crystalline cohomology group $H^1_{\log-\mathrm{crys}}(((\overline{X},\overline{S})/\mathbb{Z}_p),\mathcal{H}_k)$. Since $\mathcal{H}_k$ has the structure of a $\log F$-crystal as studied by Ogus \cite{Ogus1994Fcrystals}, we obtain a $\mathbb{Z}_p$ module with a Frobenius action. By the classical work of Kato \cite{kato1989logarithmic} we have a comparison theorem with de Rham cohomology
\begin{equation*}
    H^1_{\log-\mathrm{crys}}(((\overline{X},\overline{S})/\mathbb{Z}_p),\mathcal{H}_k)\otimes \mathbb{Q}_p\cong H^1_{\mathrm{dR}}((X_{\mathbb{Q}_p},S_{\mathbb{Q}_p}),\mathcal{H}_k)
\end{equation*}
and a comparison with rigid cohomology
\begin{equation*}
    H^1_{\log-\mathrm{crys}}(((\overline{X},\overline{S})/\mathbb{Z}_p),\mathcal{H}_k)\otimes \mathbb{Q}_p\cong H^1_{\mathrm{dR}}(V_{\lambda},\mathcal{H}_k^{\mathrm{rig}})
\end{equation*}
compatible with the action of Frobenius. We then find a Frobenius stable $\mathbb{Z}_p$-lattice $\Lambda$ in the rigid cohomology
\begin{equation*}
    \Lambda:=\mathrm{Im}(H^1_{\log-\mathrm{crys}}((\overline{X}, \overline{S})/\mathbb{Z}_p,\mathcal{H}_k)\rightarrow H^1_{\log-\mathrm{crys}}((\overline{X}, \overline{S})/\mathbb{Z}_p,\mathcal{H}_k)\otimes_{\mathbb{Z}_p}\mathbb{Q}_p\cong H^1_{\mathrm{dR}}(V_{\lambda},\mathcal{H}_k^{\mathrm{rig}})).
\end{equation*}
By equation \eqref{quotient of overconvergent modular forms as cohomology group} we can see $\Lambda$ contained in
\begin{equation*}
    \Lambda\hookrightarrow M_{k+2,\lambda}^\dagger/\theta^{k+1}(M_{-k,\lambda}^\dagger).
\end{equation*}

\subsection{Explicit Frobenius lift}
Consider now the classical case of $S$ consisting of the set of supersingular points and $\lambda<1/p+1$. As explained by \cite{katz1973padic} and \cite{kassaei2004padic}, there exist a 
section of the projection map $X(p)\rightarrow X$ induced by the \emph{canonical subgroup}.
This morphism gives a lift of Frobenius $V$ on the modular curve $X$ given by
\begin{equation}\label{Frobenius lift not too supersingular locus}
    V:V_{\lambda}\rightarrow V_{\lambda^{p}}
\end{equation}
acting at the level of the moduli space sending the elliptic curve or abelian surface to the corresponding quotient by the canonical subgroup. In the case of the elliptic modular curve, the $V$ operator acts on $q$-expansion by
\begin{equation*}
    (Vf)(q^{1/N})=p^kf(q^{p/N}).
\end{equation*}

The morphism $V$ is flat and we can then define a $U_p$ operator as the trace of Frobenius acting on the de Rham cohomology $H^1_{dR}(V_{\lambda^{p}},\mathcal{H}_k)$. Coleman proved the following theorem in the case of the elliptic modular curve.
\begin{theo}[\cite{coleman1996classical} Theorem 5.4]\label{V and U_p inverse}
    The space $H^1_{\mathrm{dR}}(V_{\lambda},\mathcal{H}_k^{\mathrm{rig}})$ is finite dimensional and the operators $V$ and $U_p$ acting on the modular curve by correspondence induce endomorphisms $\mathrm{Frob}$ and $\mathrm{Ver}$ of $H^1_{\mathrm{dR}}(V_{\lambda},\mathcal{H}_k^{\mathrm{rig}})$ such that
    \begin{equation*}
        \mathrm{Frob}\circ\mathrm{Ver}= \mathrm{Ver}\circ\mathrm{Frob}=p^{k+1}.
    \end{equation*}
\end{theo}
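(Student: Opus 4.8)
The plan is to realize $\mathrm{Frob}$ and $\mathrm{Ver}$ as the two legs of the degree-$p$ correspondence coming from the canonical-subgroup isogeny $\phi:\mathcal{E}\to\mathcal{E}/C$, and to factor their composition into a purely geometric contribution (the degree of $V$) times a coefficient contribution (the Frobenius--Verschiebung relation on the rank-$2$ isocrystal $\mathcal{H}$, raised to the $k$-th symmetric power). For the elliptic curve this is precisely Coleman \cite{coleman1996classical}; I will indicate how the identical bookkeeping runs in the quaternionic setting after cutting by the idempotent $e_1$, which is the extension the paper is after.

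First I would fix the framework that makes both operators endomorphisms of a single finite-dimensional space. Since $\lambda<1$ we have $V_\lambda\subset V_{\lambda^p}$, and the restriction map $H^1_{\mathrm{dR}}(V_{\lambda^p},\mathcal{H}_k^{\mathrm{rig}})\to H^1_{\mathrm{dR}}(V_\lambda,\mathcal{H}_k^{\mathrm{rig}})$ is an isomorphism because both sides compute $H^1_{\log\text{-}\mathrm{crys}}((\overline{X},\overline{S})/\mathbb{Z}_p,\mathcal{H}_k)\otimes\mathbb{Q}_p$ compatibly; finite-dimensionality follows since this $\log$-crystalline group is a finitely generated $\mathbb{Z}_p$-module \cite{kato1989logarithmic}. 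Using this identification, $\mathrm{Frob}$ is the map induced by $V:V_\lambda\to V_{\lambda^p}$, namely the geometric pullback $V^*$ followed by the isocrystal Frobenius structure $\mathrm{Sym}^k\phi^*:V^*\mathcal{H}_k\xrightarrow{\sim}\mathcal{H}_k$, while $\mathrm{Ver}=U_p$ is the transpose correspondence, the coefficient Verschiebung $\mathrm{Sym}^k\hat\phi^*:\mathcal{H}_k\to V^*\mathcal{H}_k$ followed by the finite-flat trace $V_*$. That these descend to hypercohomology uses that $\mathcal{H}_k$ is an overconvergent $\log$ $F$-isocrystal \cite{katz1973padic,kassaei2004padic}, so its Frobenius structure is horizontal and hence compatible with $V^*$ and $V_*$.

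The heart of the argument is the identity $\mathrm{Ver}\circ\mathrm{Frob}=p^{k+1}$. Routing both maps through the intermediate group $H^1_{\mathrm{dR}}(V_\lambda,V^*\mathcal{H}_k^{\mathrm{rig}})$, the composition reads $V_*\circ\mathrm{Sym}^k(\hat\phi^*)\circ\mathrm{Sym}^k(\phi^*)\circ V^*=V_*\circ\mathrm{Sym}^k(\hat\phi^*\phi^*)\circ V^*$. Now $\hat\phi^*\phi^*=(\phi\hat\phi)^*=[p]^*$ acts as multiplication by $p$ on each fiber of $V^*\mathcal{H}$, since $\phi\hat\phi=[p]$ and $[p]^*=p$ on $H^1$ of an abelian variety; by functoriality $\mathrm{Sym}^k(\hat\phi^*\phi^*)=p^k\cdot\mathrm{id}$ is a scalar on $V^*\mathcal{H}_k$. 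Pulling this scalar out and invoking the projection formula $V_*V^*=\deg(V)=p$, where $V$ is finite flat of degree $p$ because its reduction is the relative Frobenius of $\overline{X}$, gives $\mathrm{Ver}\circ\mathrm{Frob}=p^k\cdot p=p^{k+1}$. Finally, since $p^{k+1}$ is invertible in $\mathbb{Q}_p$ and the space is finite-dimensional, $\mathrm{Frob}$ is injective, hence bijective, so $\mathrm{Ver}=p^{k+1}\mathrm{Frob}^{-1}$ and therefore $\mathrm{Frob}\circ\mathrm{Ver}=p^{k+1}$ as well, yielding both equalities at once.

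The main obstacle is the quaternionic case, the genuinely new part. There the universal object is a false elliptic curve $\mathcal{A}$ and $\mathcal{H}=e_1 H^1_{\mathrm{dR}}(\mathcal{A})$ is the rank-$2$ idempotent component, so I must show that the canonical-subgroup isogeny $\phi$ and its dual $\hat\phi$ are $B$-equivariant, so that $\phi^*$ and $\hat\phi^*$ preserve the $e_1$-decomposition and the relation $\hat\phi\circ\phi=[p]$ restricts to $\hat\phi^*\phi^*=p$ on $e_1 H^1_{\mathrm{dR}}(\mathcal{A})$ --- this is exactly the rank-$2$ input the elliptic computation consumed. Its existence and functoriality is Kassaei's canonical-subgroup theory \cite{kassaei2004padic}; granting it, the symmetric-power scalar $p^k$ and the projection-formula factor $p$ appear verbatim. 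A secondary technical point is the strict Frobenius-equivariance of the restriction and comparison isomorphisms with $\log$-crystalline cohomology at the cusps $C$, which is what pins the constant to $p^{k+1}$ exactly rather than merely up to a $p$-adic unit.
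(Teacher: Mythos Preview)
The paper does not actually prove this statement. It is stated as a citation of Coleman's \cite{coleman1996classical} Theorem~5.4 for the elliptic case, and immediately afterwards the text says only that ``this relation extends formally from the results of Kassaei \cite{kassaei2004padic} to the Shimura curve over $\mathbb{Q}$. A more detailed proof of this statement will appear in PhD thesis of the author.'' So there is no proof in the paper to compare your proposal against.

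That said, your sketch is a faithful reconstruction of the standard argument underlying Coleman's theorem, and it identifies correctly the two ingredients: the coefficient contribution $\mathrm{Sym}^k(\hat\phi^*\phi^*)=p^k$ coming from $\phi\hat\phi=[p]$ on the rank-$2$ isocrystal, and the geometric contribution $V_*V^*=\deg V=p$ from the projection formula. Your use of the comparison with $\log$-crystalline cohomology to get finite-dimensionality and to identify the cohomologies at radii $\lambda$ and $\lambda^p$ is exactly the mechanism the paper invokes elsewhere, so it is consistent with the surrounding framework. The deduction of $\mathrm{Frob}\circ\mathrm{Ver}=p^{k+1}$ from $\mathrm{Ver}\circ\mathrm{Frob}=p^{k+1}$ via invertibility on a finite-dimensional $\mathbb{Q}_p$-space is clean and correct.

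Your own diagnosis of the quaternionic obstacle is also on target: the only nontrivial point beyond Coleman is that the canonical-subgroup isogeny and its dual are $\mathcal{O}_B$-linear, so that $\phi^*,\hat\phi^*$ commute with the idempotent $e_1$ and the relation $\hat\phi^*\phi^*=p$ descends to the rank-$2$ summand $e_1H^1_{\mathrm{dR}}(\mathcal{A})$. This is precisely what the paper defers to Kassaei and the forthcoming thesis; you have not proved it either, but you have located it accurately.
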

As further explained, the action of Verschibung on the cohomology group $H^1_{\mathrm{dR}}(V_{\lambda^p},\mathcal{H}_k^{\mathrm{rig}})$ is given by
\begin{equation*}
    \mathrm{Ver}:[f]\rightarrow [U_pf]
\end{equation*}
for $f\in M_{k+2,\lambda}^\dagger$ under the isomorphism \eqref{quotient of overconvergent modular forms as cohomology group}
\begin{equation*}
    M_{k+2,\lambda}^\dagger/\theta^{k+1}(M_{-k,\lambda}^\dagger)\cong H^1_{\mathrm{dR}}(V_{\lambda},\mathcal{H}_k^{\mathrm{rig}}).
\end{equation*}
This relation extends formally from the results of Kassaei \cite{kassaei2004padic} to the Shimura curve over $\mathbb{Q}$. A more detailed proof of this statement will appear in PhD thesis of the author.

Enlarging the set $S$ and removing disks from the ordinary locus, we can restrict the action of Frobenius and $U_p$ to a smaller domain. The key easy lemma for Frobenius neighborhoods is the following
\begin{lemm}\label{lemma strict neighborhood}
    Let $B$ the residue disk of an ordinary point $\alpha$ defined over $\mathbb{F}_p$, let $t$ be a Serre-Tate parameter. Then $B\cong \mathrm{Spf}(\mathbb{Z}_{p}[[t]])_{\eta}$ and the Frobenius lift $V$ restricted to $B$ is a strict contraction.
\end{lemm}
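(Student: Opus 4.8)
The plan is to handle the two claims in turn. That $B$ is an open unit disk on which the Serre--Tate parameter is a coordinate is pure deformation theory; the contraction property then reduces, once the shape of $V$ near $\overline{\alpha}$ is known, to a one-line ultrametric estimate. As this is an ``easy lemma'', I do not expect a serious obstacle, but two soft points want care: the uniform treatment of the quaternionic case, and the verification that $V$ genuinely maps the residue disk \emph{into itself} (this is where $\mathbb{F}_p$-rationality of $\overline{\alpha}$ is used).

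\textbf{Identification of $B$.} Since $X/\mathbb{Z}_p$ is smooth of relative dimension $1$ and $\overline{\alpha}$ has residue field $\mathbb{F}_p$, the completed local ring $\widehat{\mathcal{O}}_{X,\overline{\alpha}}$ is a power-series ring $\mathbb{Z}_p[[t]]$ and its rigid generic fibre is the residue disk $B=\mathrm{red}^{-1}(\overline{\alpha})=\{|t|<1\}$. By representability of the moduli problem ($N>3$) and Serre--Tate local moduli, since $\overline{\alpha}$ is ordinary this ring is canonically the deformation ring of the (false) elliptic curve $\overline{\mathcal{E}}_{\overline{\alpha}}$ with its prime-to-$p$ level structure; Serre--Tate theory identifies it with a formal torus $\widehat{\mathbb{G}}_m=\mathrm{Spf}\,\mathbb{Z}_p[[t]]$ over $\mathbb{Z}_p$ (one-dimensional, the étale and multiplicative parts of the $p$-divisible group each having dimension $1$), and the Serre--Tate parameter $t$ is exactly this coordinate, normalised so that $t=0$ is the canonical lift. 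In the quaternionic case I would first apply the fixed idempotent $e_1$, so that $e_1\mathcal{A}_{\overline{\alpha}}[p^\infty]$ is an ordinary height-$2$ $p$-divisible group governing the deformation problem, and argue as in Kassaei \cite{kassaei2004padic}. Either way $B\cong\mathrm{Spf}(\mathbb{Z}_p[[t]])_\eta$.

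\textbf{$V$ on $B$.} Because $\lambda<1/(p+1)$ the wide open $V_\lambda$ contains the whole ordinary locus, so $V$ is defined on $B$; and since $V$ reduces mod $p$ to relative Frobenius of $\overline{X}/\mathbb{F}_p$ — which over the prime field is the $p$-power map on $\overline{X}$ and hence fixes the $\mathbb{F}_p$-point $\overline{\alpha}$ — we get $\mathrm{red}\circ V=\mathrm{Frob}_{\overline{X}}\circ\mathrm{red}$ on $B$ and therefore $V(B)\subseteq B$. A rigid self-map of the open unit disk over $\mathbb{Q}_p$ is given by $t\mapsto g(t)$ for some $g\in\mathbb{Q}_p[[t]]$, and the bound $|g(t)|<1$ for all $|t|<1$ forces $g\in\mathbb{Z}_p[[t]]$ with $g(0)\in p\mathbb{Z}_p$; since $V$ lifts Frobenius, moreover $g\equiv t^p\pmod p$, so
\begin{equation*}
g(t)=t^p+p\,h(t),\qquad h\in\mathbb{Z}_p[[t]].
\end{equation*}
Serre--Tate functoriality under the canonical-subgroup isogeny in fact pins down $g(t)=(1+t)^p-1$, so $g(0)=0$, i.e.\ $V$ fixes the canonical lift $t=0$; I would only need $g(0)=0$, which can also be read off the moduli description using injectivity of reduction on prime-to-$p$ torsion.

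\textbf{The estimate and conclusion.} For $0<\lambda<1$ and $t_1,t_2\in B$ with $|t_1|,|t_2|\le\lambda$, writing $g(t_1)-g(t_2)$ as $(t_1-t_2)$ times a divided difference,
\begin{equation*}
g(t_1)-g(t_2)=(t_1-t_2)\Bigl((t_1^{p-1}+\cdots+t_2^{p-1})+p\,\delta h(t_1,t_2)\Bigr),\qquad \delta h\in\mathbb{Z}_p[[t_1,t_2]],
\end{equation*}
so $|g(t_1)-g(t_2)|\le\max(\lambda^{p-1},p^{-1})\,|t_1-t_2|$ with $\max(\lambda^{p-1},p^{-1})<1$. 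Thus $V|_B$ is Lipschitz with constant $<1$ on every affinoid subdisk $\{|t|\le\lambda\}$, hence a \emph{strict contraction}; since $g(0)=0$ it carries $\{|t|\le\lambda\}$ strictly into itself (there $|g(t)|\le\max(\lambda^{p},p^{-1}\lambda)<\lambda$) and has the canonical lift $t=0$ as its unique fixed point in $B$.
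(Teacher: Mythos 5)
Your proof is correct, and in fact the paper states this lemma without any proof at all (it is introduced only as ``the key easy lemma''), so your argument supplies exactly what is missing: the Serre--Tate identification $B\cong\mathrm{Spf}(\mathbb{Z}_p[[t]])_\eta$, the observation that $\mathbb{F}_p$-rationality of $\overline{\alpha}$ forces $V(B)\subseteq B$, the explicit form $t\mapsto(1+t)^p-1$ of the canonical-subgroup Frobenius lift in Serre--Tate coordinates, and the ultrametric divided-difference estimate giving Lipschitz constant $\max(\lambda^{p-1},p^{-1})<1$ on each affinoid subdisk. This is surely the argument the author intends, and your fixed point $t=0$ at the canonical lift is also what justifies the paper's subsequent claim that $V$ maps $A_{\alpha,\lambda'}$ to $A_{\alpha,\lambda'^p}$ after removing a ball containing $\alpha$ and the canonical lift.
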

Fix $\alpha$ to be an ordinary $\mathbb{Z}_p$ point on $X$ and $B\cong \mathrm{Spf}(\mathbb{Z}_{p}[[t]])_{\eta}$ its residue disk, where we have fixed the parametrization. Consider $V_{\lambda}$ the rigid variety obtained as before by removing small disks in the supersingular locus and $V$ the Frobenius lift obtained from the canonical subgroup. Then by the previous lemma, 
\begin{equation*}
V_{|B}:B\rightarrow B    
\end{equation*}
acts as a strict contraction. Removing a  small ball of radius $\lambda'$ containing $\alpha$ and the canonical lift of $\overline{\alpha}$, $V$ acts on the annulus contained in $B$ as
\begin{equation*}
    V:A_{\alpha,\lambda'}\rightarrow A_{\alpha,\lambda'^p}.
\end{equation*}
This means that we can remove from $V_{\lambda}$ a small ball in $B$ and several annuli around on the supersingular locus to obtain a wide open $V_{\lambda'}$ where the Frobenius lift $V$ acts as in \eqref{Frobenius lift not too supersingular locus}. 

Combining the results of this section we obtain that there exists $f_1,\dots,f_r\in M_{k+2,\lambda}^\dagger$ $\lambda$-overconvergent modular forms with $r=\dim_{\mathbb{Q}_p}H^1_{\mathrm{dR}}(V_{\lambda},\mathcal{H}_k^{\mathrm{rig}})$ such that 
\begin{equation*}
    \Lambda=\langle [f_i]:i=1,\dots,r\rangle_{\mathbb{Z}_p}
\end{equation*}
on which the $V$ operator acts as an endomorphism and for which we have the relation $V\circ U_p =U_p\circ V =p^{k+1}$. Under the chain of isomorphisms \eqref{meromorphic to overconvergent isomorphism}, we can pick $f_1,\dots,f_r$ to be meromorphic modular forms over $\mathbb{Q}_p$ with poles along $S$.

\begin{mainthm} \label{Theorem characteristic polynomial of Up action}
    Let $X$ be a modular curve (elliptic or quaternionic) of level $N$ defined over $\mathbb{Z}_p$, with $N>3$, $p>3$ and $(N,p)=1$. Fix $\alpha$ be a $\mathbb{Z}_p$ point on the modular curve. Let $k$ be a positive integer with $p>k+1$, let $P(X),Q(X)\in \mathbb{Q}_p[X]$ be respectively the characteristic polynomial for the Frobenius action on $\mathrm{Sym}^k(\alpha^*\mathcal{H})[1]$ and on $H^1_{\mathrm{dR}}(X^{\mathrm{rig}},\mathcal{H}_k^\mathrm{rig})$
    \begin{align*}
        P(X)=\det(\mathrm{Frob}-X&|\mathrm{Sym}^k(\alpha^*\mathcal{H})[1])=\sum_{i=0}^{k+1}c_iX^i,\\ Q(X)=\det(\mathrm{Frob}-X&|H^1_{\mathrm{dR}}(X^{\mathrm{rig}},\mathcal{H}_k))=\sum_{i=0}^{d}d_iX^i,\\
        P(X)Q(X)&=\sum_{i=0}^{d+k+1}e_iX^i
    \end{align*}
    with $d=\dim_{\mathbb{Q}_p}M_{k+2}+\dim_{\mathbb{Q}_p}S_{k+2}$. Let $M=d+k+1$, then for every $f\in M_{k+2}^{\mathrm{mero},\{\alpha\}}$ we have
    \begin{equation*}
        \sum_{i=0}^{M}e_{M-i}p^{M-i}U_p^i(f)\in \theta^{k+1}\left(M_{-k,\lambda}^{\dagger,\{\alpha\}}\right)
    \end{equation*}
    If we assume $P(X),Q(X)$ coprime, then there exist $f_1,\dots,f_{k+1}\in  M_{k+2}^{\mathrm{mero},\{\alpha\}}$ such that $[f_i]\in \Lambda$ for $i=1,\dots,k+1$ and 
    \begin{equation*}
        \sum_{i=0}^{k+1}c_ip^{k+1-i}U_p^i(f)\in \theta^{k+1}\left(M_{-k,\lambda}^{\dagger,\{\alpha\}}\right)
    \end{equation*}
    for every $f\in \langle f_1,\dots,f_{k+1}\rangle_{\mathbb{Z}_p}$.
\end{mainthm}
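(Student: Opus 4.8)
The plan is to run a Cayley--Hamilton argument for the Frobenius on the cohomology of the once-punctured curve and then convert the resulting polynomial relation into a relation for $U_p$ via Theorem~\ref{V and U_p inverse}. First I would specialize the Frobenius-equivariant Gysin sequence \eqref{Gysin sequence} to $S=\{\alpha\}$, obtaining a short exact sequence of finite-dimensional $\mathbb{Q}_p$-vector spaces with $\mathrm{Frob}$-action
\begin{equation*}
0\to H^1_{\mathrm{dR}}(X^{\mathrm{rig}},\mathcal{H}_k^{\mathrm{rig}})\to H^1_{\mathrm{dR}}(V_\lambda,\mathcal{H}_k^{\mathrm{rig}})\xrightarrow{\ \mathrm{Res}\ }\mathrm{Sym}^k(\alpha^*\mathcal H)[1]\to 0 .
\end{equation*}
Since characteristic polynomials are multiplicative along Frobenius-equivariant exact sequences, the characteristic polynomial of $\mathrm{Frob}$ on the middle term is $P(X)Q(X)=\sum_{i=0}^{M}e_iX^i$, and its dimension is $M=d+k+1$ by the dimension counts recorded after \eqref{Gysin sequence} together with Lemma~\ref{lemma exact sequence theta quotient and relative de Rham}. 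Cayley--Hamilton then gives $\sum_{i=0}^{M}e_i\,\mathrm{Frob}^i=0$ on $H^1_{\mathrm{dR}}(V_\lambda,\mathcal{H}_k^{\mathrm{rig}})$.

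Next I would transport this identity through Theorem~\ref{V and U_p inverse}: the operators $\mathrm{Frob},\mathrm{Ver}$ on $H^1_{\mathrm{dR}}(V_\lambda,\mathcal{H}_k^{\mathrm{rig}})$ satisfy $\mathrm{Frob}\circ\mathrm{Ver}=\mathrm{Ver}\circ\mathrm{Frob}=p^{k+1}$, so both are invertible and $\mathrm{Ver}^i\mathrm{Frob}^i=p^{(k+1)i}\,\mathrm{Id}$. Composing the Cayley--Hamilton relation on the left with $\mathrm{Ver}^M$ and cancelling matched powers of $\mathrm{Frob}$ against $\mathrm{Ver}$ converts it into a polynomial relation purely in $\mathrm{Ver}$, with the powers of $p$ determined by this bookkeeping. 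Since $\mathrm{Ver}$ acts as $[g]\mapsto[U_pg]$ under the identification \eqref{quotient of overconvergent modular forms as cohomology group}, and since every $f\in M_{k+2}^{\mathrm{mero},\{\alpha\}}$ defines, via the natural inclusion into $M_{k+2,\lambda}^{\dagger,\{\alpha\}}$, a class in $M_{k+2,\lambda}^{\dagger,\{\alpha\}}/\theta^{k+1}(M_{-k,\lambda}^{\dagger,\{\alpha\}})\cong H^1_{\mathrm{dR}}(V_\lambda,\mathcal{H}_k^{\mathrm{rig}})$, evaluating the relation at $[f]$ and reading the vanishing in the quotient yields $\sum_{i=0}^{M}e_{M-i}p^{M-i}U_p^i(f)\in\theta^{k+1}(M_{-k,\lambda}^{\dagger,\{\alpha\}})$, which is the first assertion.

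For the coprime case write $\mathcal V=H^1_{\mathrm{dR}}(V_\lambda,\mathcal{H}_k^{\mathrm{rig}})$. As $P(\mathrm{Frob})Q(\mathrm{Frob})=0$ on $\mathcal V$ and $\gcd(P,Q)=1$, a B\'ezout identity in $\mathbb{Q}_p[X]$ gives a $\mathrm{Frob}$-stable splitting $\mathcal V=\ker P(\mathrm{Frob})\oplus\ker Q(\mathrm{Frob})$. Here $H^1_{\mathrm{dR}}(X^{\mathrm{rig}},\mathcal{H}_k^{\mathrm{rig}})\subseteq\ker Q(\mathrm{Frob})$ (its Frobenius characteristic polynomial is $Q$), while $P(\mathrm{Frob})\mathcal V\subseteq H^1_{\mathrm{dR}}(X^{\mathrm{rig}},\mathcal{H}_k^{\mathrm{rig}})$ by the Gysin sequence and $P(\mathrm{Frob})$ is invertible on $\ker Q(\mathrm{Frob})$; hence $\ker Q(\mathrm{Frob})=H^1_{\mathrm{dR}}(X^{\mathrm{rig}},\mathcal{H}_k^{\mathrm{rig}})$, $\ker P(\mathrm{Frob})\xrightarrow{\ \sim\ }\mathrm{Sym}^k(\alpha^*\mathcal H)[1]$ is $(k+1)$-dimensional, and $\ker P(\mathrm{Frob})=\mathrm{Im}\,Q(\mathrm{Frob})$. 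Since $\Lambda$ is $\mathrm{Frob}$-stable, for $\tilde Q=cQ$ with $c\in\mathbb{Q}_p^\times$ chosen so that $\tilde Q\in\mathbb{Z}_p[X]$ the module $\tilde Q(\mathrm{Frob})\Lambda$ is a full-rank $\mathbb{Z}_p$-lattice inside $\Lambda\cap\ker P(\mathrm{Frob})$; I would then pick a $\mathbb{Z}_p$-basis $v_1,\dots,v_{k+1}$ of $\Lambda\cap\ker P(\mathrm{Frob})$ and, exactly as done for $\Lambda$ itself in the discussion preceding the theorem using the isomorphisms \eqref{meromorphic to overconvergent isomorphism}, represent each $v_j$ by a meromorphic modular form $f_j\in M_{k+2}^{\mathrm{mero},\{\alpha\}}$ over $\mathbb{Q}_p$, so that $[f_j]\in\Lambda$. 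For any $f\in\langle f_1,\dots,f_{k+1}\rangle_{\mathbb{Z}_p}$ we have $[f]\in\ker P(\mathrm{Frob})$, i.e.\ $\sum_{i=0}^{k+1}c_i\,\mathrm{Frob}^i[f]=0$; multiplying by $\mathrm{Ver}^{k+1}$ and converting $\mathrm{Ver}$ to $U_p$ as before yields $\sum_{i=0}^{k+1}c_ip^{k+1-i}U_p^i(f)\in\theta^{k+1}(M_{-k,\lambda}^{\dagger,\{\alpha\}})$.

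The main obstacle is not this formal skeleton but the inputs it rests on: that $U_p$ genuinely acts as an endomorphism of $H^1_{\mathrm{dR}}(V_\lambda,\mathcal{H}_k^{\mathrm{rig}})$ compatibly with meromorphic representatives whose poles remain controlled by $\{\alpha\}$ — this is where the ``enlarge the set of poles'' construction and the extension of Theorem~\ref{V and U_p inverse} to Shimura curves enter — and the precise Tate-twist normalization of $\mathrm{Frob}$ on the residue terms of \eqref{Gysin sequence}, which is what pins down the exact exponents of $p$ appearing in the two displayed identities and the matching of the coefficients $c_i,e_i$ with the correct powers of $U_p$. Once these are in place, the steps above are purely linear-algebraic, together with the already-established fact that classes in $\Lambda$ admit meromorphic representatives over $\mathbb{Q}_p$.
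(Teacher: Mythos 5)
Your proposal is correct and follows essentially the same route as the paper: Gysin sequence for $S=\{\alpha\}$, annihilation of $H^1_{\mathrm{dR}}(V_\lambda,\mathcal{H}_k^{\mathrm{rig}})$ by $P(X)Q(X)$, composition with $\mathrm{Ver}^{M}$ using $\mathrm{Frob}\circ\mathrm{Ver}=p^{k+1}$, and identification of $\mathrm{Ver}$ with $U_p$; your B\'ezout treatment of the coprime case is only a more explicit version of the paper's appeal to the splitting of the sequence. The one step you defer as an ``input'' --- making $U_p$ act compatibly by enlarging the poles to $S=\{\alpha\}\cup SS_p$ --- is precisely what the paper supplies via the functoriality diagram comparing the Gysin sequences for $\{\alpha\}$ and for $\{\alpha\}\cup SS_p$, so nothing essential is missing.
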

\begin{proof}
    Consider the Gysin exact sequence \eqref{Gysin sequence}
\begin{equation*}
    0\rightarrow H^{1}_{\mathrm{dR}}(X^{rig},\mathcal{H}_k^{\mathrm{rig}})\rightarrow H^1_{\mathrm{dR}}(V_\lambda,\mathcal{H}_k^{\mathrm{rig}})\xrightarrow{\mathrm{Res}}\mathrm{Sym^k}(\alpha^*\mathcal{H})[1]\rightarrow 0.
\end{equation*}    
    The assumption that the characteristic polynomials of Frobenius are coprime imply that the extension group of $\mathbb{Q}_p$-vector spaces with Frobenius action is zero and then the sequence split. In general, the product of the characteristic polynomials of Frobenius annihilates $H^1_{\mathrm{dR}}(V_\lambda,\mathcal{H}_k^{\mathrm{rig}})$. For every class $[f]\in H^1_{\mathrm{dR}}(V_\lambda,\mathcal{H}_k^{\mathrm{rig}})$ with $f\in M_{k+2}^{\mathrm{mero},\{\alpha\}}$ representative obtained under the chain of isomorphisms \eqref{meromorphic to overconvergent isomorphism} we then have
    \begin{align}\label{characteristic polynomial with Frobenius}
        \sum_{i=0}^{M}e_i\mathrm{Frob}^i([f])=0.
    \end{align}
    We can now use the trick of enlarging the set of removed points to allow an explicit lift of Frobenius. Consider $S=\{\alpha\}\cup SS_{p}$ where $SS_p$ is the set of supersingular points on $X$. Consider the following commutative diagram obtained by functoriality \eqref{Gysin sequence}
    \[
\adjustbox{scale=0.8,center}{%
\begin{tikzcd}
0 \arrow[r] & {H^1_{\mathrm{dR}}(X^{\mathrm{rig}},\mathcal{H}_k^{\mathrm{rig}})} \arrow[r] & {M_{k+2,\lambda}^{\dagger,S}/\theta^{k+1}\left(M_{-k,\lambda}^{\dagger,S}\right)} \arrow[r]              & {\bigoplus_{\alpha\in SS_{p,\mathbb{Q}_p}}\mathrm{Sym}^kH^1_{\mathrm{dR}}(\mathcal{E}_\alpha)\oplus \mathrm{Sym}^kH^1_{\mathrm{dR}}(\alpha^*\mathcal{E})[1]} \arrow[r] & 0\\
0 \arrow[r] & {H^1_{\mathrm{dR}}(X^{\mathrm{rig}},\mathcal{H}_k^{\mathrm{rig}})} \arrow[r] \arrow[u,"="] & {M_{k+2,\lambda}^{\dagger,\{\alpha\}}/\theta^{k+1}\left(M_{-k,\lambda}^{\dagger,\{\alpha\}}\right)} \arrow[r] \arrow[u]             & {\mathrm{Sym}^kH^1_{\mathrm{dR}}(\alpha^*\mathcal{E})[1]} \arrow[r] \arrow[u]& 0\\
\end{tikzcd}
}
\]
By Theorem \ref{V and U_p inverse} we have that Frobenius and Verschiebung act on $M_{k+2,\lambda}^{\dagger,S}/\theta^{k+1}\left(M_{-k,\lambda}^{\dagger,S}\right)$ by $V$ and $U_p$ respectively. Composing \eqref{characteristic polynomial with Frobenius} with $\mathrm{Ver}^{M}$ we obtain
\begin{align}
        \sum_{i=0}^{M}e_i(\mathrm{Ver}^{M}\circ\mathrm{Frob}^i)[f]=0,\\
        \sum_{j=0}^{M}e_{M-j}p^{M-j}U_p^{j}(f)\in \theta^{k+1}\left(M_{-k,\lambda}^{\dagger,\{\alpha\}}\right).
    \end{align}
In the case where the exact sequence splits, we consider a basis of elements coming from $\Lambda$ and we obtain the previous relation with the characteristic polynomial $P(X)=\det(\mathrm{Frob}-X|\mathrm{Sym}^k(\alpha^*\mathcal{H})[1])$.
\end{proof}

We conclude the section with two remarks. The first on a possible improvement using Hecke operators, the latter on known results about the slope of $p$-newforms.

\begin{rema}[Hecke operators]
    We could strengthen the previous theorem using the theory of Hecke operators. The geometric correspondences inducing Hecke operators act on the cohomology groups. By taking a linear combination $T=\sum\lambda_nT_n$, we can define an operator that annihilates the first classical term in the Gysin sequence \eqref{Gysin sequence}. In this way, the Frobenius submodule obtained from the cohomology group $H^1_{\mathrm{dR}}(V_\lambda,\mathcal{H}_k^{\mathrm{rig}})$ under the action of $T$ becomes isomorphic to the piece coming from the de Rham cohomology of the fibers.
\end{rema}

\begin{rema}[Slope of $p$-newforms]
    Consider once again the case of $S$ consisting of all the supersingular points of $X$. Then \cite{coleman1996classical} in the case of elliptic modular curves identified the parabolic part of $H^1_{\mathrm{dR}}(V_{p/p+1},\mathcal{H}_k)$ with classical modular forms of level $\Gamma_1(N)\cap\Gamma_0(p)$ and weight $k+2$. This has been later generalized in \cite{kassaei2009overconvergence} including the case of quaternionic modular forms. Theorem \ref{V and U_p inverse} allows us to relate the slopes of eigenvalues of the Frobenius with the slopes of eigenvalues of $U_p$ operator. In the elliptic case we obtain that the Frobenius module
    \begin{equation*}
        \bigoplus_{\alpha\in S_{\mathbb{Q}_p}}\mathrm{Sym^k}(\alpha^*\mathcal{H})\cong\bigoplus_{\alpha\in S_{\mathbb{Q}_p}}\mathrm{Sym}^kH^1_{\mathrm{dR}}(\mathcal{E}_{\alpha})
    \end{equation*}
    is isoclinic of slope $\frac{k}{2}$ where $\mathcal{E}_{\alpha}$ is the elliptic curve corresponding to the point $\alpha$. From the exact sequence \eqref{Gysin sequence} we deduce that the newspace of the classical modular forms of level $\Gamma_1(N)\cap\Gamma_0(p)$ is isoclinic of slope $\frac{k+2}{2}$ due to the $1$-shift and then slope $k/2$ due to the relation $V\circ U_p=p^{k+1}$.
\end{rema}

\section{Arithmetic applications}\label{arithmetic application section}
\subsection{Congruences on $q$-expansion}
In this section, we restrict to the case where $X=X_1(N)$ is an elliptic modular curve and we consider the expansion at the cusp $[\infty]$. We summarize here the classical geometric description of the $q$-expansion, referring to \cite{katz1973padic} for further details. Consider the infinitesimal neighborhood $\mathrm{Spf}(\mathbb{Z}_p[[q^{1/N}]])$ of the cusp $[\infty]$ of the formal model $\mathfrak{X}$. The fiber product with the universal elliptic curve gives rise to the Tate curve
\begin{equation*}
    \begin{tikzcd}
\mathrm{Tate}(q^{1/N}) \arrow[r] \arrow[d] & \mathcal{E} \arrow[d] \\
{\mathrm{Spf}(\mathbb{Z}_p[[q^{1/N}]])} \arrow[r,"\iota"]  & \mathfrak{X}.          
\end{tikzcd}
\end{equation*}
The global sections of the line bundle are given by
\begin{equation*}
    \Gamma(\mathrm{Spf}(\mathbb{Z}_p[[q^{1/N}]]),\iota^*\omega)=\mathbb{Z}_p[[q^{1/N}]]\omega_{\mathrm{can}}
\end{equation*}
where $\omega_{\mathrm{can}}$ is the canonical differential $dt/t$ on the Tate curve $\mathbb{G}_m/q^\mathbb{Z}$. 
The global sections of the sheaf of differential 1-forms on $X$ with logarithmic poles at the cusps are given by
\begin{equation*}
    \Gamma(\mathrm{Spf}(\mathbb{Z}_p[[q^{1/N}]]),\iota^*\Omega^1_X(\log C))=\mathbb{Z}_p[[q^{1/N}]]\frac{dq}{q}.
\end{equation*}
The Kodaira-Spencer isomorphism \eqref{Kodaira Spencer} gives an identification
\begin{equation*}
    KS(\omega_{\mathrm{can}}^2)=\frac{dq}{q}
\end{equation*}
Given a modular form $f\in \Gamma(U,\omega^k\otimes \Omega^1_{X}(\log C))$ with $U\subset X$ we can then define its $q$-expansion as
\begin{equation*}
    \iota^*f=f(q^{1/N})\omega_{\mathrm{can}}^k\otimes\frac{dq}{q},\quad f(q^{1/N})=\sum_{n=0}^\infty a_n(f)q^{\frac{n}{N}}.
\end{equation*}
The pullback of the relative de Rham cohomology at infinity is given by
\begin{equation*}
    \Gamma(\mathrm{Spf}(\mathbb{Z}_p[[q^{1/N}]]),\iota^*\mathcal{H})=\mathbb{Z}_p[[q^{1/N}]])\omega_{\mathrm{can}}\oplus\mathbb{Z}_p[[q^{1/N}]])\eta_{\mathrm{can}}
\end{equation*}
where 
\begin{align*}
\nabla(\omega_{\mathrm{can}})=:\eta_{\mathrm{can}}\otimes\frac{dq}{q}\in \iota^*(\mathcal{H}\otimes\Omega^{1}_X),\qquad
    \nabla(\eta_{\mathrm{can}})=0.
\end{align*}
In general, we will have
\begin{equation*}
    \Gamma(\mathrm{Spf}(\mathbb{Z}_p[[q^{1/N}]]),\iota^*\mathcal{H}_k)=\bigoplus_{a+b=k}\mathbb{Z}_p[[q^{1/N}]])\omega_{\mathrm{can}}^a\eta_{\mathrm{can}}^b
\end{equation*}
By the acyclicity of $\mathrm{Spf}(\mathbb{Z}_p[[q^{1/N}]])$, we obtain
\begin{equation*}
 H^1_{\mathrm{dR}}(\mathrm{Spf}(\mathbb{Z}_p[[q^{1/N}]]),\iota^*\mathcal{H}_k)=\Gamma(\mathrm{Spf}(\mathbb{Z}_p[[q^{1/N}]]),\iota^*(\mathcal{H}_k\otimes_{X}\Omega^1_{X}))/\nabla(\Gamma(\mathrm{Spf}(\mathbb{Z}_p[[q^{1/N}]]),\iota^*(\mathcal{H}_k))).
\end{equation*}
Using the exact sequence in Lemma \ref{lemma exact sequence theta quotient and relative de Rham} we then obtain
\begin{align*}
    H^1_{\mathrm{dR}}(\mathrm{Spf}(\mathbb{Z}_p[[q^{1/N}]]),\iota^*\mathcal{H}_k)&\cong \Gamma(\mathrm{Spf}(\mathbb{Z}_p[[q^{1/N}]]),\iota^*\omega^{k+2}/\theta^{k+1}(\Gamma(\mathrm{Spf}(\mathbb{Z}_p[[q^{1/N}]]),\iota^*\omega^{-k})))\\
    &\cong\mathbb{Z}_p[[q^{1/N}]]/\theta^{k+1}(\mathbb{Z}_p[[q^{1/N}]]).
\end{align*}
Combining the constructions, we can then form a \emph{$q$-expansion map} on the cohomology classes
\begin{lemm}
    The morphism $\iota:\mathrm{Spf}(\mathbb{Z}_p[[q^{1/N}]])\rightarrow \mathfrak{X}$ between formal schemes induces the following morphism
    \begin{align*}
        \Lambda&\rightarrow \mathbb{Z}_p[[q^{1/N}]]/\theta^{k+1}(\mathbb{Z}_p[[q^{1/N}]])\\
        [f]&\mapsto[f(q)].
    \end{align*}
\end{lemm}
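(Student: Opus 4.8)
The plan is to obtain the map purely by functoriality --- $\iota$ is a morphism of formal $\mathbb{Z}_p$-schemes carrying compatible log structures, hence it induces a pullback on $\log$-crystalline cohomology --- and then to recognize this pullback as the $q$-expansion map by transporting it through the chain of comparison isomorphisms \eqref{meromorphic to overconvergent isomorphism}. Integrality of the target is then not a separate issue: it \emph{is} the computation $H^1_{\mathrm{dR}}(\mathrm{Spf}(\mathbb{Z}_p[[q^{1/N}]]),\iota^*\mathcal{H}_k)\cong\mathbb{Z}_p[[q^{1/N}]]/\theta^{k+1}(\mathbb{Z}_p[[q^{1/N}]])$ carried out just above the lemma, which by Kato's comparison \cite{kato1989logarithmic} is the $\log$-crystalline cohomology of the special fibre of $\mathrm{Spf}(\mathbb{Z}_p[[q^{1/N}]])$ with its log structure.

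Concretely I would proceed in three steps. (1) Equip $\mathrm{Spf}(\mathbb{Z}_p[[q^{1/N}]])$ with the log structure attached to the divisor $\{q^{1/N}=0\}$ and observe that $\iota$ is a morphism of fine log formal schemes over $\mathbb{Z}_p$: since the cusp $[\infty]$ lies on the compactification divisor $C$ but not on $S$, the map $\iota$ pulls the log structure along $\overline S$ back trivially and the log structure along $C$ --- the one built into $(\mathcal{H}_k,\nabla)$ through $\Omega^1_X(\log C)$ --- back to exactly $\{q^{1/N}=0\}$. Functoriality of $\log$-crystalline cohomology of the $\log F$-crystal $\mathcal{H}_k$ then yields a $\mathbb{Z}_p$-linear, Frobenius-equivariant map $H^1_{\log-\mathrm{crys}}((\overline X,\overline S)/\mathbb{Z}_p,\mathcal{H}_k)\to\mathbb{Z}_p[[q^{1/N}]]/\theta^{k+1}(\mathbb{Z}_p[[q^{1/N}]])$; since $\Lambda$ is the image of the left-hand side in $H^1_{\mathrm{dR}}(V_\lambda,\mathcal{H}_k^{\mathrm{rig}})$, compatibility of the pullback with the rationalization $\otimes\mathbb{Q}_p$ shows it descends to the asserted intrinsic map on $\Lambda$ (here one uses that $H^1_{\log-\mathrm{crys}}$ is $\mathbb{Z}_p$-torsion-free for the curves and weights considered, so that $\Lambda$ is canonically this integral group). (2) After tensoring with $\mathbb{Q}_p$, both the source $H^1_{\mathrm{dR}}(V_\lambda,\mathcal{H}_k^{\mathrm{rig}})$ and the target carry the Hodge-filtration spectral sequence of the de Rham complex used in the proof of Lemma \ref{lemma exact sequence theta quotient and relative de Rham} --- on $V_\lambda$ for $\mathcal{H}_k^\bullet$, on the formal disk for $\iota^*\mathcal{H}_k^\bullet$ --- and $\iota^*$ is a morphism of these spectral sequences; hence it is induced by $\iota^*$ on the single surviving term $\mathrm{Fil}^k\mathcal{H}_k\otimes\Omega^1_X(\log C)\cong\omega^k\otimes\Omega^1_X(\log C)$. (3) The comparison isomorphisms \eqref{meromorphic to overconvergent isomorphism} (Baldassarri--Chiarellotto \cite{baldassarri-chiarellotto2001}, Deligne \cite{deligne1970equations}) realize a class of $\Lambda$ by an $f\in M_{k+2}^{\mathrm{mero},\{\alpha\}}$; under the Kodaira--Spencer isomorphism \eqref{Kodaira Spencer} and the identification \eqref{quotient of overconvergent modular forms as cohomology group}, $\iota^*$ sends it --- using $KS(\omega_{\mathrm{can}}^2)=\frac{dq}{q}$ --- precisely to $f(q^{1/N})\,\omega_{\mathrm{can}}^k\otimes\frac{dq}{q}$, i.e.\ to the class of $f(q^{1/N})$ in the $\theta$-quotient, which is by definition $[f(q)]$. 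Because the whole chain \eqref{meromorphic to overconvergent isomorphism} is compatible with the integral $\log$-crystalline structures, this class lies in the integral quotient even though $f(q^{1/N})$ itself need not have $p$-integral coefficients.

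The step I expect to be the main obstacle is the bookkeeping required in (2)--(3): checking that $\log$-crystalline cohomology, $\log$-de Rham cohomology with coefficients, Deligne's open-curve de Rham cohomology \cite{deligne1970equations}, the rigid description of Baldassarri--Chiarellotto \cite{baldassarri-chiarellotto2001}, and the $\theta$-quotient descriptions of Lemma \ref{lemma exact sequence theta quotient and relative de Rham} all assemble into one commutative diagram that stays functorial along $\iota$ and compatible with the Hodge filtration and with the Kodaira--Spencer normalization \eqref{Kodaira Spencer}. Each comparison on its own is standard; the care lies in verifying that $\iota$ is genuinely a morphism in the correct log category, that the log structure along $\overline S$ contributes nothing at the cusp, and that the Kodaira--Spencer identification used to normalize $q$-expansions is the one transported through the spectral sequence.
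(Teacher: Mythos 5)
Your proposal is correct and follows essentially the same route as the paper: the map is obtained by functoriality along $\iota$ (the paper first passes through Kato's integral comparison to $H^1_{\mathrm{dR}}((\mathfrak{X},S),\mathcal{H}_k)$ and then pulls back, which is the same mechanism as your direct functoriality of $\log$-crystalline cohomology), and the identification of the target with $\mathbb{Z}_p[[q^{1/N}]]/\theta^{k+1}(\mathbb{Z}_p[[q^{1/N}]])$ is exactly the computation carried out just above the lemma. Your steps (2)--(3) spell out compatibility checks that the paper leaves implicit, but they do not change the argument.
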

\begin{proof}
    By an integral comparison with the de Rham cohomology \cite{kato1989logarithmic} (Theorem 6.4) we can compute the crystalline cohomology by
    \begin{equation*}
        H^1_{\log-\mathrm{crys}}((\overline{X}, \overline{S})/\mathbb{Z}_p,\mathcal{H}_k)\cong H^1_{\mathrm{dR}}((\mathfrak{X},S),\mathcal{H}_k).
    \end{equation*}
    The desired map is then induced by the functoriality along the morphism $\iota$
    \begin{equation*}
        H^1_{\mathrm{dR}}((\mathfrak{X},S),\mathcal{H}_k)\rightarrow H^1_{\mathrm{dR}}(\mathbb{Z}_p[[q^{1/N}]]),\iota^*\mathcal{H}_k).
    \end{equation*}
    By the previous remark, we have
    \begin{equation*}
        H^1_{\mathrm{dR}}(\mathbb{Z}_p[[q^{1/N}]]),\iota^*\mathcal{H}_k)\cong \mathbb{Z}_p[[q^{1/N}]]/\theta^{k+1}(\mathbb{Z}_p[[q^{1/N}]]).
    \end{equation*}
\end{proof}

\begin{coro}\label{congruence q-coefficients}
    Let $X$ be an elliptic modular curve of level $N$ defined over $\mathbb{Z}_p$, with $N>3$, $p>3$ and $(N,p)=1$. Fix a $\mathbb{Z}_p$-point $\alpha$ on the modular curve. Let $k$ be a positive integer with $p>k+1$, let $P(X),Q(X)\in \mathbb{Q}_p[X]$ be respectively the characteristic polynomials of the Frobenius action on $\mathrm{Sym}^kH^1_{\mathrm{dR}}(\alpha^*\mathcal{E})[1]$, with $\mathcal{E}\rightarrow X$ the universal elliptic curve, and on $H^1_{\mathrm{dR}}(X^{\mathrm{rig}},\mathcal{H}_k^\mathrm{rig})$
    \begin{align*}
        P(X)=\det(\mathrm{Frob}-X&|\mathrm{Sym}^k(\alpha^*\mathcal{H})[1])=\sum_{i=0}^{k+1}c_iX^i,\\ Q(X)=\det(\mathrm{Frob}-X&|H^1_{\mathrm{dR}}(X^{\mathrm{rig}},\mathcal{H}_k))=\sum_{i=0}^{d}d_iX^i,\\
        P(X)Q(X)&=\sum_{i=0}^{d+k+1}e_iX^i
    \end{align*}
    with $d=\dim_{\mathbb{Q}_p}H^1_{\mathrm{dR}}(X^{\mathrm{rig}},\mathcal{H}_k)=\dim_{\mathbb{Q}_p}M_{k+2}+\dim_{\mathbb{Q}_p}S_{k+2}$.
    Let $M=d+k+1$, then for every $f\in \Lambda$ we have
    \begin{equation*}
        \sum_{i=0}^{M}e_{M-i}p^{M-i}a_{np^{l+i}}(f)\equiv 0\mod p^{l(k+1)}.
    \end{equation*}
    If we assume that $P(X),Q(X)$ are coprime, then there exist $f_1,\dots,f_{k+1}\in  M_{k+2}^{\mathrm{mero},\{\alpha\}}$ such that $[f_i]\in \Lambda$ for $i=1,\dots,k+1$ and 
    \begin{equation*}
        \sum_{i=0}^{k+1}c_{k+1-i}p^{k+1-i}a_{np^{l+i}}(f)\equiv 0 \mod p^{l(k+1)}
    \end{equation*}
    for every $f\in \langle f_1,\dots,f_{k+1}\rangle_{\mathbb{Z}_p}$.
    
\end{coro}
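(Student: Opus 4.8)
The plan is to transport the two relations of Theorem~\ref{Theorem characteristic polynomial of Up action} through the $q$-expansion map of the lemma above and then read off a single Fourier coefficient. I would first isolate the two properties of the map $\Lambda\to\mathbb{Z}_p[[q^{1/N}]]/\theta^{k+1}(\mathbb{Z}_p[[q^{1/N}]])$ that are needed: it is $\mathbb{Z}_p$-linear, and it intertwines the Verschiebung $\mathrm{Ver}$ on $\Lambda$ — which by Theorem~\ref{V and U_p inverse} sends $[f]$ to $[U_pf]$ — with the classical operator $\sum_n a_nq^{n/N}\mapsto\sum_n a_{pn}q^{n/N}$ on $q$-expansions. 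Both follow from functoriality of $\log$-crystalline cohomology along $\iota$ and along the $U_p$-correspondence, using that $\mathrm{Ver}$ is defined on and stabilises $\Lambda$ because $SS_p\subseteq S$ and $\lambda<1/(p+1)$, exactly as in the proof of Theorem~\ref{Theorem characteristic polynomial of Up action}. I would also record that the coefficients $e_i$ (and the $c_i$) lie in $\mathbb{Z}_p$, since Frobenius stabilises the crystalline lattice, so that the combinations occurring in Theorem~\ref{Theorem characteristic polynomial of Up action} have classes in $\Lambda$.

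Fix a class $[f]\in\Lambda$ and put $g=\sum_{i=0}^{M}e_{M-i}p^{M-i}U_p^i(f)$. By Theorem~\ref{Theorem characteristic polynomial of Up action}, $g\in\theta^{k+1}(M_{-k,\lambda}^{\dagger,\{\alpha\}})$, hence $[g]=0$ in $H^1_{\mathrm{dR}}(V_\lambda,\mathcal{H}_k^{\mathrm{rig}})$ and, $\Lambda$ being torsion-free, $[g]=0$ in $\Lambda$. Applying the $q$-expansion map, together with its $\mathbb{Z}_p$-linearity and its compatibility with $\mathrm{Ver}$, gives
\[
  \sum_{i=0}^{M}e_{M-i}\,p^{M-i}\,(U_p^i f)(q)=\theta^{k+1}(h),\qquad h=\sum_{m\ge 1}b_mq^{m/N}\in\mathbb{Z}_p[[q^{1/N}]].
\]

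Next I would extract the coefficient of $q^{np^l/N}$. Since the $q^{n/N}$-coefficient of $U_p^i f$ is $a_{p^i n}(f)$, the left-hand side contributes $\sum_{i=0}^{M}e_{M-i}p^{M-i}a_{np^{l+i}}(f)$. On the right, $\theta=q\,\mathrm{d}/\mathrm{d}q$ multiplies $q^{m/N}$ by $m/N$, so the contribution is $(np^l/N)^{k+1}b_{np^l}$, which — because $(N,p)=1$ and $b_{np^l}\in\mathbb{Z}_p$ — has $p$-adic valuation at least $(k+1)(v_p(n)+l)\ge l(k+1)$. Equating the two yields the first congruence. For the second, I would run the identical argument on the relation $\sum_{i=0}^{k+1}c_ip^{k+1-i}U_p^i(f)\in\theta^{k+1}(M_{-k,\lambda}^{\dagger,\{\alpha\}})$ of Theorem~\ref{Theorem characteristic polynomial of Up action}, valid for $f\in\langle f_1,\dots,f_{k+1}\rangle_{\mathbb{Z}_p}$, and again read off the $q^{np^l/N}$-coefficient.

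The part I expect to be the actual obstacle is the integral bookkeeping folded into the first paragraph: that $\mathrm{Ver}=U_p$ really preserves the crystalline lattice $\Lambda$, and that the $q$-expansion map of the lemma — defined through functoriality of $\log$-crystalline cohomology along $\iota$ — intertwines $\mathrm{Ver}$ with $\sum a_nq^{n/N}\mapsto\sum a_{pn}q^{n/N}$ on the \emph{integral} quotient $\mathbb{Z}_p[[q^{1/N}]]/\theta^{k+1}(\mathbb{Z}_p[[q^{1/N}]])$ rather than only after inverting $p$; it is precisely this that forces $h$ in the display above to have coefficients in $\mathbb{Z}_p$. Granting that, the vanishing of $[g]$, the linearity, and the valuation estimate for $\theta^{k+1}$ are all immediate, and the argument is insensitive to the elliptic-versus-quaternionic dichotomy apart from the fact that the $q$-expansion formalism only exists in the elliptic setting — which is why the corollary is stated there.
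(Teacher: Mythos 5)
Your proposal is correct and follows essentially the same route as the paper: apply Theorem \ref{Theorem characteristic polynomial of Up action}, push the resulting relation through the integral $q$-expansion map of the preceding lemma (which is exactly where the integrality of $h$ comes from), use that $U_p$ acts on $q$-expansions by $a_n\mapsto a_{pn}$, and read off the coefficient of $q^{np^l/N}$, whose image under $\theta^{k+1}$ has valuation at least $l(k+1)$. Your write-up is in fact slightly more explicit than the paper's about the final coefficient-extraction and valuation step, and correctly identifies the lattice-stability and integrality bookkeeping as the only genuinely delicate point.
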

\begin{proof}
    By Theorem \ref{Theorem characteristic polynomial of Up action} we have the following relation for every $f\in \Lambda$
    \begin{equation*}
         \sum_{i=0}^{M}e_{M-i}p^{M-i}U_p^i(f)\in \theta^{k+1}\left(M_{-k,\lambda}^{\dagger,\{\alpha\}}\right).
    \end{equation*}
    The action of $U_p$, which is the trace of the Frobenius lift $V$ on the $q$-expansion of weight $k+2$ modular forms, is classically given by
    \begin{equation*}
        f(q^{1/N})=\sum_{n\geq 0}a_n(f)q^{n/N}\mapsto (U_pf)(q^{1/N})=\sum_{n\geq 0}a_{pn}(f)q^{n/N}.
    \end{equation*}
In particular, using the previous Lemma, we obtain 
\begin{equation*}
    \sum_{i=0}^{M}e_{M-i}p^{M-i}U_p^i(f(q^{1/N}))\in \theta^{k+1}\left(\mathbb{Z}_p[[q^{1/N}]]\right)
\end{equation*}
from which we deduce for $l>1$
\begin{equation*}
   \sum_{i=0}^{M}e_{M-i}p^{M-i}a_{np^{l+i}}(f)\equiv 0 \mod p^{l(k+1)}.
\end{equation*}
\end{proof}

\subsection{An explicit example - Zhang's congruences}
In \cite{zhang2025ellipticcurvesfouriercoefficients}, Zhang provided numerous numerical observation about meromorphic modular forms and their relation with the Frobenius eigenvalues of the poles. In this short section we want to show how the previous constructions and results provide a framework to understand his congruences. 

We will work directly with the coarse moduli space $X(1)\cong \mathbb{P}^1_{j}$. The descent of the results from level $N$ to level $1$ can be carried over by classical techniques. We consider Example 4.6 of \emph{loc. cit.}. Fix $k=2$ and $p>k+1=3$, $C/\mathbb{Q}$ is the elliptic curve given by the equation
\begin{equation*}
    y^2+xy=x^3-x^2-2x-1.
\end{equation*}
We have $C$ is a CM elliptic curve with respect to the order $\mathbb{Z}[\frac{1+\sqrt{-7}}{2}]$ of discriminant -7 and $j(C)=-3375$. Consider $S=\{j(C)\}\cup SS_{p}$ where $SS_p$ consists in a set of lift of supersingular points on $X(1)$. The Gysin sequence \eqref{Gysin sequence} gives us the following commutative diagram

\adjustbox{scale=0.75,center}{%
\begin{tikzcd}
0 \arrow[r] & {H^1_{\mathrm{dR}}(X^{\mathrm{rig}},\mathcal{H}_2^{\mathrm{rig}})} \arrow[r] \arrow[d,"="]      & {M_{4,\lambda}^{\dagger,SS_p}/\theta^{3}(M_{-2,\lambda}^{\dagger,SS_p})} \arrow[r] \arrow[d] & {\bigoplus_{\alpha\in SS_{p,\mathbb{Q}_p}}\mathrm{Sym}^2H^1_{\mathrm{dR}}(\mathcal{E}_\alpha)[1]} \arrow[r] \arrow[d]      & 0 \\
0 \arrow[r] & {H^1_{\mathrm{dR}}(X^{\mathrm{rig}},\mathcal{H}_2^{\mathrm{rig}})} \arrow[r] & {M_{4,\lambda}^{\dagger,S}/\theta^{3}(M_{-k,\lambda}^{\dagger,S})} \arrow[r]              & {\bigoplus_{\alpha\in SS_{p,\mathbb{Q}_p}}\mathrm{Sym}^2H^1_{\mathrm{dR}}(\mathcal{E}_\alpha)\oplus \mathrm{Sym}^2H^1_{\mathrm{dR}}(C)[1]} \arrow[r] & 0\\
0 \arrow[r] & {H^1_{\mathrm{dR}}(X^{\mathrm{rig}},\mathcal{H}_2^{\mathrm{rig}})} \arrow[r] \arrow[u,"="] & {M_{4,\lambda}^{\dagger,\{j(C)\}}/\theta^{3}(M_{-2,\lambda}^{\dagger,\{j(C)\}})} \arrow[r] \arrow[u]             & {\mathrm{Sym}^2H^1_{\mathrm{dR}}(C)[1]} \arrow[r] \arrow[u]& 0\\
\end{tikzcd}
}
The radius $\lambda$ can be chosen accordingly to Lemma \ref{lemma strict neighborhood} to have an explicit lift of Frobenius corresponding with the quotient of the canonical subgroup.
The bottom row can be identified with the equivalent Gysin exact sequence in de Rham cohomology, in particular we have the isomorphism
\begin{equation*}
    M_{4}^{\mathrm{mero},\{j(C)\}}/\theta^{3}\left(M_{-2,\lambda}^{\mathrm{mero},\{j(C)\}}\right)\cong M_{4,\lambda}^{\dagger,\{j(C)\}}/\theta^{3}\left(M_{-2,\lambda}^{\dagger,\{j(C)\}}\right).
\end{equation*}
The three dimensional space given by $\mathrm{Sym}^2H^1_{\mathrm{dR}}(C)$ can be represented in the cohomology group by meromorphic modular forms with poles along $j(C)=-3375$. Zhang identifies the following basis
\begin{align*}
    f_1&=\frac{E_4}{j+3375},\\
    f_2&=19\cdot\frac{E_4}{j+3375}-91125\cdot\frac{E_4}{(j+3375)^2},\\
    f_3&=1399\cdot\frac{E_4}{j+3375}-19008675\cdot\frac{E_4}{(j+3375)^2}+54251268750\cdot\frac{E_4}{(j+3375)^3}.
\end{align*}
Zhang observed that for every ordinary prime $p>3$, for every $l>0$, we have
\begin{align*}
    a_{np^{l+1}}(f_1)&\equiv u_p(C)^2a_{np^{l}}(f_1)\mod p^{3l},\\
    a_{np^{l+1}}(f_2)&\equiv pa_{np^{l}}(f_2)\mod p^{3l},\\
    a_{np^{l+1}}(f_3)&\equiv p^2u_p(C)^{-2}a_{np^{l}}(f_3)\mod p^{3l}\\
\end{align*}
where $u_p(C)$ is the $p$-adic root of the polynomial $X^2-a_p(C)X+p$. According to Corollary \ref{congruence q-coefficients}, the meromorphic modular forms $f_1,f_2,f_3$ are the representatives of the classes in $H^1_{\mathrm{dR}}(V_{\lambda},\mathcal{H}_k^{\mathrm{rig}})$ corresponding to the Frobenius eigenspace isomorphic to $\mathrm{Sym}^2H^1_{\mathrm{dR}}(C)$.

\bibliographystyle{alpha}
\bibliography{bibliography}

\end{document}